\newcommand{\dist}{\operatorname{dist}}
\newcommand{\Real}{\mathbb{R}}
\newcommand{\Natural}{\mathbb{N}}
\newcommand{\inte}{\operatorname{int}}
\newcommand{\Lip}{\operatorname{Lip}}
\newcommand{\supp}{\operatorname{supp}}
\newcommand{\LsXZ}{\mathcal{L}(X,Z)}
\newcommand{\LsYZ}{\mathcal{L}(Y,Z)}
\newtheorem{thm}{Theorem}[section]
\newtheorem{cor}[thm]{Corollary}
\newtheorem{lem}[thm]{Lemma}
\newtheorem{prop}[thm]{Proposition}
\newtheorem{rem}[thm]{Remark}
\newtheorem{ex}[thm]{Example}
\newtheorem{defn}[thm]{Definition}
\numberwithin{equation}{section}
\begin{document}

\title[Smooth extension of mappings]{On smooth extensions of vector-valued functions defined on closed subsets of Banach spaces}

\author{M. Jim{\'e}nez-Sevilla and L. S\'anchez-Gonz\'alez}

\address{Departamento de An{\'a}lisis Matem{\'a}tico\\ Facultad de
Matem{\'a}ticas\\ Universidad Complutense de Madrid\\ 28040 Madrid, Spain}

\thanks{Supported in part by DGES (Spain) Project MTM2009-07848. L. S\'anchez-Gonz\'alez has  also been supported by grant MEC AP2007-00868}

\email{marjim@mat.ucm.es,
lfsanche@mat.ucm.es}

\keywords{Smooth extensions; smooth approximations.}

\subjclass[2000]{46B20}

\date{December, 2011}

\maketitle
%%%%%%%%%%%%%%
\begin{center}{\emph{{\small To the memory of Robb Fry}}}\end{center}

\begin{abstract}
Let $X$ and $ Z$ be Banach spaces, $A$ a closed subset of $X$ and a mapping $f:A\to Z$. We give necessary and sufficient conditions to obtain a $C^1$ smooth mapping $F:X \to Z$ such that $F_{\mid_A}=f$, when either (i) $X$ and $Z$ are Hilbert spaces and $X$ is separable, or (ii)   $X^*$ is separable and $Z$  is an absolute Lipschitz retract, or (iii) $X=L_2$ and $Z=L_p$ with $1<p<2$, or (iv) $X=L_p$ and $Z=L_2$ with $2<p<\infty$.
\end{abstract}

\section{Introduction}

In this note, we study how  the tecniques  given in \cite{Azafrykeener} and \cite{MarLuis} can be applied  to obtain a $C^1$ smooth extension
of a $C^1$ smooth and {\em vector-valued} function defined on a closed subset of a Banach space. More precisely, if $X$ and $Z$ are Banach spaces, $A$  is a closed subset of $X$  and  $f:A\to Z$ is a mapping, under what conditions does there exist a $C^1$ smooth mapping $F:X\to Z$ such that the restriction of $F$ to $A$ is $f$?
This note is the second part of our previous paper \cite{MarLuis}, where we studied  the problem of the extension of  a $C^1$
smooth and {\em real-valued} function defined on a closed subset  of a non-separable Banach space $X$ to a  $C^1$ smooth function defined on $X$.

The notation we use is standard. In addition, we shall follow, whenever possible, the notations given in \cite{Azafrykeener}
and \cite{MarLuis}. We shall denote a norm in a Banach space $X$  by $||\cdot||_X$ (or  $||\cdot||$ if the Banach space $X$ is understood). We denote by $B_X(x,r)$ the open ball with center $x\in X$ and radius $r>0$ (or $B(x,r)$ if the Banach space $X$ is understood). We write the closed  ball as $\overline{B}_X(x,r)$ (or $\overline{B}(x,r)$). We denote by $\LsXZ$ the space of all bounded and linear maps from the Banach space $X$ to the Banach space $Z$. If $A$ is a subset of $X$,  the restriction of a mapping $f:X \to Z$ to $A$ is denoted by $f_{\mid_A}$. We say that $G:X\to Z$ is an extension of
$g:A\to Z$ if $G_{\mid_A}=g$. A mapping $f:A\rightarrow Z$ is $L$-Lipschitz whether $||f(x)-f(y)||\le L ||x-y||$ for every $x,y \in A$ and the Lipschitz constant of $f$ is $\Lip(f):=\sup\{\frac{||f(x)-f(y)||}{||x-y||}: \,x,y \in A, \, x\not=y \}$.

The $C^1$ smooth extension problem for real-valued functions defined on a subset of an infinite-dimensional Banach space
has been recently studied in  \cite{Azafrykeener}, where it  has been shown that, if $X$ is a Banach space  with separable dual,  $Y\subset X$  is a closed subspace of $X$ and
  $f: Y \to \Real$ is a $C^1$ smooth function, then there exists a ${C}^1$ smooth extension  of $f$ to $X$.
Also, a detailed review on the theory of smooth extensions is provided in  \cite{Azafrykeener}.
A generalization of the results in \cite{Azafrykeener} was given in \cite{MarLuis} for non-separable Banach spaces,  whenever the space satisfies a certain approximation property (property (*) for  $(X,\mathbb R)$; see definition below). When $X$ satisfies this approximation property, $A$ is a closed subset of $X$ and $f:A\to \Real$ is a function, the existence of a $C^1$ smooth extension of $f$ is characterized by the following property (called ``condition (E)''  in \cite{MarLuis}):

\begin{defn}
Let $X$ and $Z$ be  Banach spaces and $A\subset X$ be a closed subset.
\begin{enumerate}
\item We say that the mapping $f:A\to Z$ satisfies \textbf{the mean value condition}  if there exists a continuous map $D:A\to \LsXZ$ such that for every $y\in A$ and every $\varepsilon>0$, there is an open ball $B(y,r)$ in $X$ such that
\begin{equation*}
||f(z)-f(w)-D(y)(z-w)||\le \varepsilon||z-w||,
\end{equation*}
 for every $z,w\in A\cap B(y,r)$. In this case, we say that $f$ satisfies the mean value condition on $A$ for the map $D$.

\item We say that the mapping $f:A\to Z$ satisfies  \textbf{the mean value condition for a bounded map}   if it satisfies the mean value condition for a bounded and continuous map $D:A\to \LsXZ$, i.e.  $\sup\{||D(y)||: y\in A\}<\infty$.

 \end{enumerate}

\end{defn}
It is a straightforward consequence of the mean value theorem that, whenever  $f:A\to Z$  is the restriction of a $C^1$ smooth  mapping  $F:X\rightarrow Z$ ($C^1$ smooth and Lipschitz mapping), then
 $f:A\to Z$ satisfies the mean value condition  for ${F'}_{\mid_A}$ (the mean value condition for the bounded map ${F'}_{\mid_A}$, respectively).

In this note we adapt the proofs  given in the real-valued case \cite{Azafrykeener, MarLuis} to obtain, under certain conditions, $C^1$ smooth extensions of $C^1$ smooth mappings $f:A\rightarrow Z$. More precisely,  let us consider  the following properties.

\begin{defn}
\begin{enumerate}
\item The pair of Banach spaces $(X,Z)$ has  {\em \textbf{property (*)}} if there is a constant $C_0\ge 1$, which depends only on $X$ and $Z$, such that for every subset $A\subset X$,  every Lipschitz mapping $f:A\to Z$ and every $\varepsilon>0$,  there is a $C^1$ smooth and Lipschitz mapping $g:X\to Z$ such that  $||f(x)-g(x)||<\varepsilon$ for all $x\in A$ and $\Lip(g)\le C_0 \Lip(f)$.
%%%%%%%%%%%
\item The pair of Banach spaces $(X,Z)$ has  {\em \textbf{property (A)}} if there is a constant $C\ge 1$, which depends only on $X$ and $Z$, such that for every Lipschitz mapping $f:X\to Z$ and every $\varepsilon>0$, there exists a $C^1$ smooth and Lipschitz mapping $g:X\to Z$ such that $||f(x)-g(x)||<\varepsilon$ for all $x\in X$ and $\Lip(g)\le C \Lip(f)$.
%%%%%%%
\item The pair of Banach spaces $(X,Z)$ has  {\em \textbf{property (E)}} if there is a constant $K\ge 1$, which depends only on $X$ and $Z$, such that for every subset $A$ of $X$ and every Lipschitz mapping $f:A\to Z$, there exists a Lipschitz extension $F:X\to Z$ such that $\Lip(F)\le K\Lip(f)$.
%%%%%%%%%%%%%%%
\item A Banach space $X$ has   {\em \textbf{property (*)}}, {\em \textbf{property (A)}} or {\em \textbf{property (E)}}  whenever the pair $(X,\Real)$ does.
\end{enumerate}
\end{defn}
 %%%%%%%%%%%%%%%%%%%%

 \begin{rem}\label{remark:properties}
 \begin{enumerate} \item Clearly, a pair of Banach spaces (X,Z)  satisfies property (A) whenever it satisfies property (*). In Section 3 we shall prove that, in general, these properties are not equivalent.

 \item It is easy to prove that a pair of Banach spaces $(X,Z)$ satisfies property (*) provided that $(X,Z)$ satisfies properties (A) and (E). Moreover, if $Z$ is a dual Banach space, then $(X,Z)$ satisfies property (*) if and only if  $(X,Z)$ satisfies properties (A) and (E). Indeed, let us assume that $(X,Z)$ satisfies property (*) and consider a Lipschitz mapping $f:A\rightarrow Z$, where $A$ is a subset of $X$. Then, for every $n\in \mathbb N$, there is a $C^1$ smooth, Lipschitz mapping $f_n:X\rightarrow Z$ such that $||f(x)-f_n(x)||\le \frac{1}{n}$ for every $x\in A$ and $\Lip(f_n)\le C_0 \Lip(f)$.
     Then, for every $x\in X$, the sequence $\{f_n(x)\}_n$ is bounded. Since the closed balls in $(Z, ||\cdot||^*)$ are weak*-compact,
     there exists for every free ultrafilter $\mathcal U$ in $\mathbb N$,
     the weak*-limit $$\widehat{f}(x):=w^*-\lim_{\mathcal U} f_n(x).$$
     Clearly, $ \widehat{f}:X\rightarrow Z$ is an extension of $f:A\rightarrow Z$ and $\Lip(\widehat{f})\le C_0 \Lip(f)$.

 \item $X$ satisfies property (*) if and only if it satisfies property (A). Indeed, this is a consequence of the fact that $X$ always  has property (E):  if $A$ is a closed subset of $X$ and $f:A\to \Real$ is a Lipschitz function, then the function $F$ defined on $X$ as
\begin{equation*}
F(x)=\inf_{a\in A}\{f(a)+\Lip(f)||x-a||\}
\end{equation*}
is a Lipschitz extension of $f$ to  $X$ and $\Lip(F)=\Lip(f)$.
\end{enumerate}
\end{rem}

In Section 2, it is stated  that, if the pair of Banach spaces $(X,Z)$ satisfies  property (*), then every mapping $f:A\rightarrow Z$, where $A$  is a closed subset of $X$, is the restriction of a $C^1$ smooth mapping ($C^1$ smooth and Lipschitz mapping) $F:X\rightarrow Z$ if and only if $f$ satisfies the mean value condition (the mean value condition for a bounded map and $f$ is Lipschitz, respectively).

In Section 3 we give examples  of pairs of Banach spaces $(X,Z)$ satisfying property (*). In particular, when either (i) $X$ and $Z$ are Hilbert spaces with $X$ separable, or (ii) $X^*$ is  separable and $Z$ is a Banach space which is an absolute Lipschitz retract, or (iii)  $X=L_2$ and $Z=L_p$ with $1<p<2$, or (iv) $X=L_p$ and $Z=L_2$ with $2<p<\infty$. We also prove that property (*) is necessary and give an example of a pair of Banach spaces  satisfying   property (A) but not property (*).

In  Section 4, it is proved that every $C^1$ smooth mapping $f:Y\to Z$ defined on a closed subspace of $X$ admits a $C^1$ smooth extension to $X$,  whenever the pair of Banach spaces $(X,Z)$ satisfies property (*) and every bounded and linear operator $T:Y \rightarrow Z$  can be extended to a bounded and linear operator on $X$.  Moreover,  we obtain some  results on bounded and linear  extension morphisms on the Banach space $C_L^1(X,Z)$ of all $C^1$ smooth and Lipschitz mappings $f:X\rightarrow Z$.

%%%%%%%%%
%%%%%%%%%
%%%%%%%%%
%%%%%%%%%

\section{On smooth extension of mappings}

The main result of this note are the following theorems.

\begin{thm}\label{theorem:extension}
Let $(X,Z)$ be a pair of Banach spaces with  property (*), $A\subset X$  a closed subset of $X$ and  a mapping $f:A\to Z$. Then,
 $f$ satisfies the mean value condition
 if and only if there is a $C^1$ smooth
extension $G$ of $f$  to  $X$.
\end{thm}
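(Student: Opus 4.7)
One direction is a routine consequence of the mean value inequality: given a $C^1$ extension $G:X\to Z$, set $D:=G'|_A$, which is continuous into $\LsXZ$. For $y\in A$ and $\varepsilon>0$, by continuity of $G'$ pick $r>0$ with $||G'(u)-G'(y)||<\varepsilon$ for $u\in B(y,r)$; then for $z,w\in A\cap B(y,r)$ the mean value inequality applied to $G$ on the segment $[z,w]$ yields $||f(z)-f(w)-D(y)(z-w)||\le\varepsilon||z-w||$, so $f$ satisfies the mean value condition for $D$.

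For the converse, my plan is to mirror the scalar construction of \cite{Azafrykeener,MarLuis}, replacing the real-valued approximation tool by property $(*)$ for the pair $(X,Z)$, and exploiting the fact that property $(*)$ applied with $Z=\mathbb R$ produces $C^1$ smooth partitions of unity subordinate to any open cover of $X$ (via the standard argument that smooth Lipschitz approximation of Lipschitz functions yields smooth bump functions and smooth partitions of unity). For each $a\in A$ introduce the local affine candidate $L_a(x):=f(a)+D(a)(x-a)$, which is $C^1$ from $X$ to $Z$ and carries the correct value and first-order data at $a$. I would choose a locally finite Whitney-type open cover $\{B_i=B(x_i,r_i)\}_i$ of $X\setminus A$ with $r_i\le\tfrac12\dist(x_i,A)$, pick $a_i\in A$ satisfying $||x_i-a_i||\le 2\dist(x_i,A)$, and fix a subordinate $C^1$ smooth partition of unity $\{\varphi_i\}$. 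The extension is then
$$
G(x):=\begin{cases} f(x), & x\in A, \\ \displaystyle\sum_i\varphi_i(x)\,L_{a_i}(x), & x\in X\setminus A. \end{cases}
$$
Locally the sum is finite, so $G$ is automatically $C^1$ on the open set $X\setminus A$.

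The main obstacle is verifying that $G$ is $C^1$ at every $y\in A$, with $G'(y)=D(y)$ and $G'$ continuous there. The Whitney constraints force $\dist(x_i,A)\le 2||x-y||$ and hence $||x-a_i||\le 5||x-y||$ whenever $x\in\supp\varphi_i$, so the relevant $a_i$'s cluster at $y$ as $x\to y$. Using $\sum_i\varphi_i(x)\equiv 1$ on $X\setminus A$ and $L_y(x)=f(y)+D(y)(x-y)$, one obtains
$$
G(x)-f(y)-D(y)(x-y)=\sum_i\varphi_i(x)\bigl[L_{a_i}(x)-L_y(x)\bigr],
$$
and the decomposition $L_{a_i}(x)-L_y(x)=[f(a_i)-f(y)-D(y)(a_i-y)]+[D(a_i)-D(y)](x-a_i)$ allows each summand to be controlled by $\varepsilon||x-y||$ via the mean value condition at $y$ and the continuity of $D$. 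Differentiating $G$ on $X\setminus A$ and using $\sum_i\varphi_i'(x)\equiv 0$ leads to
$$
G'(x)-D(y)=\sum_i\varphi_i'(x)\bigl[L_{a_i}(x)-L_y(x)\bigr]+\sum_i\varphi_i(x)\bigl(D(a_i)-D(y)\bigr);
$$
the second sum is handled by continuity of $D$, while the first requires the standard Whitney-type bound $||\varphi_i'(x)||=O(1/\dist(x,A))$ combined with a uniform bound on the number of overlapping supports, so that the sum is $O(\varepsilon)$. This estimation based on Whitney-type smooth partitions of unity is the technical heart of the argument; the remaining verifications (continuity of $G$ at $A$, $C^1$-ness on $X\setminus A$) are routine.
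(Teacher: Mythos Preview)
Your treatment of the easy direction is fine. The converse has a genuine gap.

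Your Whitney-type construction hinges on two properties of the partition of unity invoked at the very end: the gradient bound $\|\varphi_i'(x)\|=O(1/\dist(x,A))$ and a \emph{uniform} bound on the number of indices $i$ with $x\in\supp\varphi_i$. In $\mathbb R^n$ both follow from the classical Whitney decomposition, whose bounded-overlap property rests on a volume (doubling) argument. In an infinite-dimensional Banach space this machinery is unavailable: by Riesz's lemma any ball of radius $2r$ contains infinitely many pairwise-disjoint balls of radius $r/5$, so Besicovitch/Whitney covering fails, and the $\sigma$-discrete refinements that \emph{do} exist (Stone/Rudin paracompactness, as used in \cite{Rudin} and \cite[Lemma~2.2]{MarLuis}) yield only a \emph{point-dependent} overlap bound $n_x$, with Lipschitz constants of the partition functions growing without control through the layers. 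Without the uniform overlap bound, the sum $\sum_i\|\varphi_i'(x)\|\,\|L_{a_i}(x)-L_y(x)\|$ cannot be shown to be $O(\varepsilon)$, and your argument for the continuity of $G'$ at points of $A$ breaks down. This is precisely the obstruction that forces \cite{Azafrykeener,MarLuis} and the present paper to abandon a one-shot Whitney extension.

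The paper's route is an iterative scheme $G=\sum_{n\ge 1}G_n$. A first $C^1$ approximant $G_1$ is produced (Lemma~\ref{lema:clave}) so that the remainder $f-{G_1}|_A$ is Lipschitz with small constant and satisfies the mean value condition for the \emph{bounded} map $D-G_1'|_A$ of small norm; then Lemma~\ref{theorem:clave} is applied to the successive remainders to obtain corrections $G_n$ with $\sup_X\|G_n\|$ and $\Lip(G_n)$ of order $\varepsilon/2^n$, so that the series and its term-by-term derivative converge uniformly to a $C^1$ extension. Inside those lemmas the partitions of unity do have Lipschitz constants $L_{n,\beta}$ that grow with the layer index, but this growth is neutralised by choosing the local approximation errors of size $\varepsilon/(2^{n+2}L_{n,\beta})$ --- property $(*)$ is invoked with these tailored accuracies. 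That matching of errors to the unbounded Lipschitz constants of the partition is the substitute for the bounded-overlap Whitney cover you are assuming, and it is the technical idea your proposal is missing.
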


\begin{thm}\label{theorem:Lipschitz:extension}
Let $(X,Z)$ be a pair of Banach spaces with  property (*), $A\subset X$  a closed subset of $X$ and  a mapping $f:A\to Z$. Then,
 $f$ is Lipschitz and satisfies the mean value condition for a bounded map  if and only if there is a $C^1$ smooth and Lipschitz extension
$G$ of $f$ to  $X$.

Moreover, if $f$ is Lipschitz and satisfies the mean value condition for a bounded map
$D:A\to \LsXZ$ with
$M:=\sup\{||D(y)||:y\in A\}<\infty$,
then we can obtain a $C^1$ smooth and Lipschitz extension $G$ with $\Lip(G)\le (1+C_0)(M+\Lip(f))$, where $C_0$ is the constant  given by  property (*) (which   depends only on
$X$ and $Z$).
\end{thm}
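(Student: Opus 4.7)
The easy direction ``$\Leftarrow$'' is standard: if $G:X\to Z$ is a $C^1$ and Lipschitz extension of $f$, set $D:=G'|_A$. Then $D$ is continuous into $\mathcal{L}(X,Z)$, $\sup_{y\in A}\|D(y)\|\le\Lip(G)<\infty$, and the mean value inequality $\|f(z)-f(w)-D(y)(z-w)\|\le\varepsilon\|z-w\|$ on a small ball $B(y,r)$ follows from the classical mean value theorem applied to $G$ combined with the continuity of $G'$ at $y$; moreover $\Lip(f)\le\Lip(G)$.

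For the substantive direction ``$\Rightarrow$'', I would follow the real-valued scheme of \cite{Azafrykeener, MarLuis}, noting that all of its steps are pointwise linear in the target, and so transfer to $Z$-valued mappings as long as one has vector-valued smooth Lipschitz approximations available, which is precisely what property (*) supplies. Concretely, I would first invoke Theorem~\ref{theorem:extension} on $f$ (whose hypothesis is satisfied a fortiori) to obtain a $C^1$ smooth extension $H:X\to Z$ of $f$ with $H'(y)=D(y)$ for all $y\in A$. Since $H'$ is continuous and $\|D(y)\|\le M$ on $A$, there exists $\delta_0>0$ such that $\|H'(x)\|\le M+L$ (with $L:=\Lip(f)$) on the tube $U_{\delta}:=\{x:\dist(x,A)<\delta\}$ for every $\delta\in(0,\delta_0]$. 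Next, apply property (*) to $f$ with parameter $\varepsilon$, obtaining a $C^1$ smooth and Lipschitz $g:X\to Z$ with $\Lip(g)\le C_0L$ and $\|g(x)-f(x)\|<\varepsilon$ on $A$. Finally, using a $C^1$ smooth cut-off $\psi:X\to[0,1]$ with $\psi\equiv 1$ on $U_{\delta/2}$, $\psi\equiv 0$ off $U_\delta$, and $\|\psi'\|\lesssim 1/\delta$ (available from a smooth Lipschitz approximation of a Lipschitz function of $\dist(\cdot,A)$, which is real-valued and follows by composing property (*) with a nonzero continuous linear functional on $Z$, or directly from the scalar theory in \cite{MarLuis}), define
\[
G(x)\;:=\;\psi(x)\,H(x)+(1-\psi(x))\,g(x).
\]
Then $G|_A=f$, $G$ is $C^1$ on $X$, and $G$ coincides with $g$ outside $U_\delta$.

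The main obstacle is to tune the parameters $\delta,\varepsilon$ so that $\Lip(G)\le(1+C_0)(M+L)$. Differentiating gives $G'=\psi H'+(1-\psi)g'+\psi'(H-g)$. The first two terms are pointwise bounded by $\max(M+L,\, C_0 L)\le (M+L)+C_0 L$, which already yields the desired $(1+C_0)(M+L)$; the dangerous term is $\psi'(H-g)$, where $\|\psi'\|\lesssim 1/\delta$. One controls it by first showing that $\|H(x)-g(x)\|\lesssim \varepsilon+(M+L+C_0L)\,\dist(x,A)$ on $U_\delta$, via a path integration from a near-by point of $A$ using the bounds on $H'$ and $g'$ inside the tube; then choosing $\varepsilon\ll\delta\ll\delta_0$ makes the $\varepsilon/\delta$ part arbitrarily small, while the other contribution is an absolute bounded multiple of $M+L$ that is absorbed into the $(1+C_0)(M+L)$ budget by carrying out the same bookkeeping as in the scalar argument of \cite{MarLuis}. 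The only place where the $Z$-valued setting could obstruct the proof is the existence of the smooth Lipschitz approximation $g$, and that is supplied exactly by property (*); all remaining estimates depend only on $\|\cdot\|_Z$-norms and therefore transfer verbatim.
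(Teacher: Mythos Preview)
Your ``$\Leftarrow$'' direction is correct. The ``$\Rightarrow$'' direction contains a genuine gap.

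The step that fails is the uniform--tube claim: ``Since $H'$ is continuous and $\|D(y)\|\le M$ on $A$, there exists $\delta_0>0$ such that $\|H'(x)\|\le M+L$ on $U_\delta$ for every $\delta\le\delta_0$.'' Continuity of $H'$ and a bound on $A$ give, for each $y\in A$, a radius $r_y>0$ with $\|H'\|\le M+L$ on $B(y,r_y)$, but there is no reason these radii are bounded away from $0$ when $A$ is not compact. Theorem~\ref{theorem:extension} is being used as a black box, so you have no control over $H'$ off $A$: already for $X=\Real$, $A=\mathbb{Z}$, $f\equiv0$, $D\equiv0$ one can write down a $C^1$ extension $H$ with $H|_A=0$, $H'|_A=0$, yet $\|H'\|$ equal to a fixed positive number at points $x_n$ with $\dist(x_n,A)\to0$. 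Replacing the uniform tube by $\bigcup_y B(y,r_y)$ forces a cutoff with $\|\psi'\|$ locally of order $1/r_y$, and then the term $\psi'(H-g)$ contributes $\varepsilon/r_y$, which cannot be made small by any single choice of $\varepsilon$. (A minor additional point: Theorem~\ref{theorem:extension} as stated does not assert $H'|_A=D$; this comes out of the construction, but you would need to cite the proof.) Even granting the tube claim, your bookkeeping does not close: the cutoff satisfies $\|\psi'\|\le 2C_0/\delta$ (the $C_0$ is unavoidable, since the scalar cutoff itself uses property~(*)), and on its support $\|H-g\|\le\varepsilon+(M+L+C_0L)\delta$, so $\|\psi'(H-g)\|$ alone contributes essentially $2C_0(1+C_0)(M+L)$, on top of the convex--combination term; the final constant would be a fixed multiple of $(1+C_0)(M+L)$, not $(1+C_0)(M+L)$ itself.

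The paper avoids the tube issue entirely by never producing a non-Lipschitz intermediate extension. It builds $G=\sum_{n\ge1}G_n$ as a series of \emph{globally} $C^1$ Lipschitz maps: $G_1$ is obtained from a Lipschitz approximation lemma (Lemma~\ref{theorem:clave} with $R=+\infty$) applied directly to $f$, giving $\Lip(G_1)\le C_0(M+\Lip(f))+\varepsilon/2^4$, and each correction $G_n$ ($n\ge2$) is obtained by the same lemma applied to the residual $f-\sum_{i<n}g_i$, with $\Lip(G_n)\le\varepsilon/2^n$ and $\|G_n\|_\infty\le\varepsilon/2^n$. Summing yields the stated bound $(1+C_0)(M+\Lip(f))$. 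The key difference from your plan is that the global Lipschitz control is built in at every stage, rather than imposed afterwards by a cutoff.
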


First of all, let us notice that if the pair $(X,Z)$ satisfies property (*), $X$ does too, i.e. there is a constant $C_0\ge 1$ (which  depends only on $X$) such that for every subset $A\subset X$, every  Lipschitz function $f:A\to\Real$ and every $\varepsilon>0$, there is a $C^1$ smooth and  Lipschitz function $g:X\to\Real$ such that $|g(x)-f(x)|<\varepsilon$ for all $x\in A$ and $\Lip(g)\le C_0\Lip(f)$. % (this is called ``property $(*)$'' in \cite{MarLuis}).
Indeed, let us take $e\in Z$ with $||e||=1$  and $\varphi\in Z^*$ with $||\varphi||=1$ and $\varphi(e)=1$. Let $f:A\to \Real$ be a $L$-Lipschitz function and $\varepsilon>0$. The mapping $h:A\to Z$ defined as $h(x)=f(x)e$ for all $x\in A$, is  $L$-Lipschitz.  Since the pair $(X,Z)$ satisfies property (*),  there exists a $C^1$ smooth and Lipschitz mapping $\widetilde{g} :X\to Z$ such that $||h(x)-\widetilde{g}(x)||<\varepsilon$ for all $x\in A$ and $\Lip(\widetilde{g})\le C_0L$.
The required  function $g:X\to \Real$ can be defined as $g(x):=\varphi(\widetilde{g}(x))$.

The proofs of the main theorems of this section follow the ideas of the  real-valued case. We shall need the following lemmas.

%Let us recall that it is not difficult to obtain a {\em bounded} and Lipschitz extension from a {\em bounded} and Lipschitz mapping defined on a subset $A$ of $X$, whenever $(X,Z)$ satisfies (E).
%Indeed, let $A$ be a subset of $X$,  $f:A \to Z$  a Lipschitz mapping with $M=\sup\{||f(y)||:y\in A\}<\infty$ and  $\widetilde{F}:X\to Z$  a Lipschitz extension of $f$ with $\Lip(\widetilde{F})\le K\Lip(f)$. Since the closed ball $\overline{B}_{Z}(0,M)$  is a $2$-Lipschitz retract of $Z$ given by a $2$-Lipschitz mapping $r:Z\to \overline{B}_{Z}(0,M)$,  we can define the bounded and Lipschitz extension of $f$ as  $F(x):=r(\widetilde{F}(x))$ for every $x\in X$. Notice that $\Lip(F) \le 2K\Lip(f)$.

\begin{lem}\label{lemma:approx:bounded}
Let $(X,Z)$ be a pair of Banach spaces with property (*). Then,  for every subset $A\subset X$,  every Lipschitz mapping $f:A\to B_Z(0,R)$ (with $R\in (0,\infty)$) and every $\varepsilon>0$, there is a $C^1$ smooth and Lipschitz mapping $h:X\to Z$ such that
\begin{enumerate}
\item[(i)]  $||f(x)-h(x)||<\varepsilon$  {for every $x\in A$,}
\item[(ii)] $||h(x)|| < C_0\Lip(f)^{1/2}+R+\varepsilon$ for every $x\in X$, and
\item[(iii)] $\Lip(h)\le C_0 ((1+2C_0)\Lip(f)+2(R+\varepsilon)\Lip(f)^{1/2})$.
\end{enumerate}
%%%%%%%%%%%%%%%%%
%$h:X\to B_Z(0,C_0\Lip(f)^{1/2}+R+\varepsilon)$ such that $||h(x)-f(x)||<\varepsilon$ {for every $x\in A$, and}
%%%%%%%%%%%%%%%%%
%\begin{equation*}
% \Lip(h)\le C_0((1+2C_0)\Lip(f)+2(R+\varepsilon)\Lip(f)^{1/2}).
%\end{equation*}
%%%%%%%%%%%%%%%%%%%%%
\end{lem}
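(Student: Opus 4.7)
The plan is two-fold. First, apply property (*) directly to $f:A\to Z$ to obtain a $C^1$ smooth Lipschitz mapping $g:X\to Z$ approximating $f$ on $A$; then multiply $g$ by a scalar $C^1$ smooth bump $\phi:X\to[0,1]$ that equals $1$ on $A$ and vanishes outside an $r$-neighborhood of $A$, with $r:=\Lip(f)^{-1/2}$ chosen so that the sup-bound (ii) and the Lipschitz-bound (iii) balance.

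Write $L:=\Lip(f)$ (the case $L=0$ is trivial: take $h$ constant equal to a nearby value of $f$) and fix $\varepsilon'\in(0,\varepsilon)$. Property (*) supplies a $C^1$ smooth $g:X\to Z$ with $\Lip(g)\le C_0L$ and $\|g(x)-f(x)\|<\varepsilon'$ on $A$. Consequently, whenever $\dist(x,A)<r$, choosing $a\in A$ with $\|x-a\|<r$ yields
\[
\|g(x)\|<\|f(a)\|+\varepsilon'+C_0L\|x-a\|<R+\varepsilon'+C_0Lr.
\]

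Set $r:=L^{-1/2}$ and consider the $(2/r)$-Lipschitz tent
\[
\psi(x):=\min\bigl\{1,\max\{0,\,2-2\dist(x,A)/r\}\bigr\},
\]
which is $1$ on $A$ and vanishes on $\{\dist(\cdot,A)\ge r\}$. As observed in the paragraph after Theorem~\ref{theorem:Lipschitz:extension}, $X$ inherits property (*) from $(X,Z)$, so for a small $\delta>0$ there is a $C^1$ smooth $\tilde\phi:X\to\Real$ with $\|\tilde\phi-\psi\|_\infty<\delta$ and $\Lip(\tilde\phi)\le 2C_0/r$. Post-composing with a $C^1$ cutoff $\theta:\Real\to[0,1]$ that vanishes on $(-\infty,\delta]$, equals $1$ on $[1-\delta,\infty)$, and has $\|\theta'\|_\infty$ arbitrarily close to $(1-2\delta)^{-1}$, we obtain $\phi:=\theta\circ\tilde\phi$. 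Then $\phi\equiv 1$ on $A$, $\phi\equiv 0$ outside the $r$-neighborhood of $A$, and $\Lip(\phi)$ can be made $\le 2C_0L^{1/2}(1+\eta)$ for any preassigned $\eta>0$.

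Set $h:=\phi\cdot g$. Conclusion (i) is immediate from $\phi\equiv 1$ on $A$. For (ii), $h$ vanishes where $\phi=0$; elsewhere $\|h(x)\|\le\|g(x)\|<R+\varepsilon'+C_0L^{1/2}$. For (iii), the product-rule estimate
\[
\Lip(h)\le \Lip(g)+\Lip(\phi)\sup_{\supp\phi}\|g\|\le C_0L+2C_0L^{1/2}(1+\eta)\bigl(R+\varepsilon'+C_0L^{1/2}\bigr)
\]
expands to $C_0L(1+2C_0(1+\eta))+2C_0(1+\eta)(R+\varepsilon')L^{1/2}$; choosing $\varepsilon'<\varepsilon$ first, then $\eta$ small enough that $\eta(C_0L+R+\varepsilon')\le\varepsilon-\varepsilon'$, yields the stated bound.

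The principal technical issue is producing a bump $\phi$ which is exactly $1$ on $A$ and exactly $0$ outside an $r$-neighborhood of $A$, while retaining Lipschitz constant close to the optimal $2C_0/r$: property (*) only yields approximations of the tent $\psi$, so the post-composition with $\theta$ is essential, and the loss incurred by $\theta'$ not being identically $1$ is absorbed into the margins $\varepsilon-\varepsilon'$ and $\delta$.
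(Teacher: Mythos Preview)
Your proof is correct and follows essentially the same route as the paper's: approximate $f$ by a $C^1$ smooth Lipschitz $g$ via property~(*), then multiply by a $C^1$ smooth bump that equals $1$ on $A$ and vanishes outside the $\Lip(f)^{-1/2}$-neighborhood of $A$, with Lipschitz constant at most (about) $2C_0\Lip(f)^{1/2}$. The only difference is that you spell out the bump construction explicitly (approximate the tent $\psi$ and post-compose with $\theta$), whereas the paper simply asserts the existence of such an $h_A$; note a harmless slip in your final $\eta$-condition, where $C_0L$ should read $C_0L^{1/2}$.
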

%%%%%%%%%%%%%%
\begin{proof}
 Without loss of generality we may assume that  $\Lip(f)>0$. By property (*) there is a $C^1$ smooth and Lipschitz mapping $g:X\to Z$ such that
  \begin{equation*}
 ||f(x)-g(x)||<\varepsilon \qquad \text{for all $x\in A$, \ and} \quad \Lip(g)\le C_0 \Lip(f).
 \end{equation*}

 Let us define $W:=\{x\in X:\dist(x,\overline{A})\ge\frac{1}{\Lip(f)^{1/2}}\}$. Since $X$ satisfies property (*),  there is a $C^1$ smooth function $h_A:X\to [0,1]$ such that $h_A(x)=1$ whenever $x\in \overline{A}$, $h_A(x)=0$ whenever $x\in W$ and $\Lip(h_A)\le 2C_0 \Lip(f)^{1/2}$.    Let us define $h:X\to Z$ by $h(x):=g(x)h_A(x)$, which is $C^1$ smooth and $||f(x)-h(x)||<\varepsilon$ for all $x\in A$ (recall that $h_A(x)=1$ for  all $x\in A$).

Since $h_A(x)=0$ for all $x\in W$, we have that $h(x)=0$ for all $x\in W$. Also, $||h(x)||\le ||g(x)|| < R+ \varepsilon$ for all $x\in \overline{A}$. Now,  for each $x\not\in W$ there is $x_0\in \overline A$ such that $||x-x_0|| < \frac{1}{\Lip(f)^{1/2}}$ and thus,
\begin{equation*}
||g(x)||\le ||g(x)-g(x_0)||+||g(x_0)|| < C_0\Lip(f)||x-x_0||+ R+\varepsilon < C_0\Lip(f)^{1/2}+R+\varepsilon.
\end{equation*}
 Therefore, $||h(x)|| < C_0\Lip(f)^{1/2}+R+\varepsilon$ for every $x\in X$.  Now, if  $x\in \inte(W)$, then  $h'(x)=0$. Also,  if
 $x\not\in \inte(W)$, then
%%%%%%%%%%%%
   \begin{align*}
 ||h'(x)||\le & ||g'(x)|| \hspace{0.03cm} ||h_A(x)||+||h_A'(x)|| \hspace{0.03cm} ||g(x)||\\
  \le & C_0 \Lip(f)+ 2C_0\Lip(f)^{1/2}(C_0\Lip(f)^{1/2}+R+\varepsilon) \\
 \le & C_0((1+2C_0)\Lip(f)+2(R+\varepsilon)\Lip(f)^{1/2}).
 \end{align*}
 %%%%%%%%%
Thus, $\Lip(h)\le C_0((1+2C_0)\Lip(f)+2(R+\varepsilon)\Lip(f)^{1/2})$.
 \end{proof}

\begin{lem}\label{lemma:approx}
Let $(X,Z)$ be a pair of Banach spaces with  property (*). Then,
for every subset $A\subset X$, every continuous mapping  $F:X\to Z$ such that $F_{\mid_A}$ is Lipschitz, and every $\varepsilon>0$, there exists  a $C^1$ smooth  mapping $G:X\to Z$ such that
\begin{enumerate}
\item[(i)] $||F(x)-G(x)|| < \varepsilon$ for all $x \in X$,
\item[(ii)] $\Lip(G_{\mid_A})\le C_0\Lip(F_{\mid_A})$. Moreover,  $||G'(y)||\le C_0 \Lip(F_{\mid_A})$ for all $y\in A$, where $C_0$ is the constant given by property (*).
\item[(iii)]  In addition, if $F$ is Lipschitz, then  there exists a constant $C_1\ge C_0$ depending only on $X$ and $Z$, such that the mapping $G$ can be chosen to be Lipschitz on $X$ and $\Lip(G)\leq C_1 \Lip(F)$.
\end{enumerate}
\end{lem}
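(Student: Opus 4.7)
My plan is to adapt the partition of unity argument from the real-valued case \cite{MarLuis,Azafrykeener} to the vector-valued setting. The key observation is that, as noted just before Lemma~\ref{lemma:approx:bounded}, $(X,Z)$ with property (*) forces $X$ itself to satisfy property (*), and therefore to admit $C^1$ smooth Lipschitz partitions of unity with quantitatively controlled constants.

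First I would construct an open cover tailored to both $F$ and $A$. For each $y\in X$, by continuity of $F$ pick $r_y>0$ such that $||F(z)-F(y)||<\varepsilon/3$ for all $z\in B(y,2r_y)$; if moreover $y\notin A$, require $r_y< \frac{1}{3}\dist(y,A)$ so that $B(y,2r_y)\cap A=\emptyset$. Pass to a locally finite refinement $\{B(y_i,r_i)\}_i$ and, using the partition-of-unity machinery provided by property (*) for $X$, obtain a $C^1$ smooth partition of unity $\{\psi_i\}_i$ subordinate to $\{B(y_i,2r_i)\}_i$ satisfying $\Lip(\psi_i)\le C/r_i$ for some universal $C$ depending only on $C_0$, together with uniformly bounded overlap. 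Then define
\begin{equation*}
G(x):=\sum_i \psi_i(x)\,F(y_i),
\end{equation*}
which is $C^1$ smooth as a locally finite sum of $C^1$ smooth mappings.

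Conclusion (i) is immediate from $\sum_i \psi_i\equiv 1$: only indices $i$ with $x\in\supp\psi_i\subset B(y_i,2r_i)$ contribute, so $||G(x)-F(x)||\le\sum_i\psi_i(x)\,||F(y_i)-F(x)||<\varepsilon/3$. For (ii), if $y\in A$ and $y_i\notin A$ then $y\notin\supp\psi_i$ by our choice of $r_i$, hence $\psi_i'(y)=0$; using $\sum_i\psi_i'\equiv 0$ one writes
\begin{equation*}
G'(y)=\sum_{y_i\in A}\psi_i'(y)\bigl(F(y_i)-F(y)\bigr),
\end{equation*}
and since each contributing $y_i$ lies in $A$ with $||y_i-y||\le 2r_i$, the bounds $\Lip(\psi_i)\le C/r_i$ and $||F(y_i)-F(y)||\le 2r_i\Lip(F_{\mid_A})$ combined with bounded overlap yield $||G'(y)||\le C_0\Lip(F_{\mid_A})$ after absorbing the universal constants into the $C_0$ provided by property (*). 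The Lipschitz bound on $G_{\mid_A}$ follows by the same telescoping: for $a,b\in A$,
\begin{equation*}
G(a)-G(b)=\sum_i\bigl(\psi_i(a)-\psi_i(b)\bigr)\bigl(F(y_i)-F(a)\bigr),
\end{equation*}
where again only indices with $y_i\in A$ contribute, and one estimates as above.

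For (iii), if $F$ is $M$-Lipschitz on all of $X$, the same computation applies globally with no restriction $y_i\in A$: since $||F(y_i)-F(x)||\le 2Mr_i$ for every $x\in\supp\psi_i$, one obtains $||G'(x)||\le C_1 M$ for some constant $C_1\ge C_0$ depending only on $X$ and $Z$. The main obstacle I foresee lies not in the definition of $G$ but in extracting from property (*) a partition of unity with the precise estimates $\Lip(\psi_i)\le C/r_i$ and uniformly bounded overlap; this is the standard but technically delicate step in the real-valued theory, and chasing the multiplicative constants so that the final bound is exactly $C_0$ (rather than some larger universal constant) requires careful bookkeeping of the type already carried out in \cite{MarLuis}.
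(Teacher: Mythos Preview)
Your approach is genuinely different from the paper's and has a gap that is not mere bookkeeping. The paper does not build $G$ from a partition of unity weighted by point values $F(y_i)$; instead it applies property~(*) for $(X,Z)$ \emph{directly} to the Lipschitz map $F_{\mid_A}$, obtaining a globally $C^1$ Lipschitz $g:X\to Z$ with $||F-g||<\varepsilon/4$ on $A$ and $\Lip(g)\le C_0\Lip(F_{\mid_A})$. It then blends $g$ with a global approximation $h$ of $F$ via a single cutoff $u$ equal to $1$ wherever $||F-g||\le\varepsilon/4$, so that $G:=ug+(1-u)h$ coincides with $g$ on a neighbourhood of $A$. This yields $||G'(y)||=||g'(y)||\le C_0\Lip(F_{\mid_A})$ and $\Lip(G_{\mid_A})\le\Lip(g)\le C_0\Lip(F_{\mid_A})$ for free, with the \emph{exact} constant $C_0$ from property~(*).

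Your construction, by contrast, never invokes property~(*) for the pair $(X,Z)$, only partitions of unity on $X$, and the bound you reach is $||G'(y)||\le 2CN\,\Lip(F_{\mid_A})$ with $C$ and $N$ coming from the partition. You cannot ``absorb'' these into $C_0$: $C_0$ is a fixed datum of $(X,Z)$, not a parameter you may enlarge, so statement~(ii) as written is simply not proved. There are secondary issues as well: the Lipschitz partitions of unity actually available in this generality (cf.\ properties (P1)--(P3) used in Lemma~\ref{lema:clave}) have Lipschitz constants growing like $2^n$ and only locally finite overlap, not the uniform ``$\Lip(\psi_i)\le C/r_i$ with bounded overlap'' you assume; and your telescoping for $\Lip(G_{\mid_A})$ breaks when $a\notin\supp\psi_i$ but $b\in\supp\psi_i$, since then $||F(y_i)-F(a)||\le 2r_i\Lip(F_{\mid_A})$ is no longer available and $A$ need not be convex, so the pointwise bound on $G'$ along $A$ does not control $\Lip(G_{\mid_A})$. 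The paper's blending device bypasses all of these problems at once.
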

%%%%%%%%%%%%%
\noindent {\em Sketch of the proof.}
The proof is similar to the real-valued case (see \cite[Lemma 2.3]{MarLuis}).  Let us outline  the Lipschitz case. Let us apply property (*) to $F$ and  ${F}_{\mid_A}$ to obtain  $C^1$ smooth and Lipschitz mappings $g,\ h:X\to  Z$ such that
\begin{enumerate}
\item[(a)] $||{F}(x)-g(x)||<{\varepsilon}/{4}$ for all $x\in A$,
\item[(b)] $||F(x)-h(x)||<\varepsilon$ for all $x\in X$,
\item[(c)] $\Lip(g)\le C_0 \Lip(F_{\mid_A})$ and $\Lip(h)\le C_0 \Lip(F)$.
\end{enumerate}
%%%%%
There is a $C^1$ smooth and Lipschitz function $u:X \to[0,1]$ such that $u(x)=1$ whenever   $||F(x)-g(x)||\le {\varepsilon}/{4}$ and $u(x)=0$ whenever  $||F(x)-g(x)||\ge \varepsilon/2$, with $\Lip(u)\le \frac{ 9 C_0(\Lip(F)+C_0\Lip(F_{\mid_A}))}{\varepsilon}$ (see \cite{MarLuis} for details).
Then, the mapping $G:X\to Z$ defined as $G(x)=u(x)g(x)+(1-u(x))h(x)$ for every $x \in X$ is the required mapping with $C_1:=\frac{C_0}{2}(29+27C_0)$.
\qed

%%%%%%%%%%%%
%%%%%%%%%%%%
%%%%%%%%%%%%
%%%%%%%%%%%%

\begin{lem}\label{lema:clave}
Let $(X,Z)$ be a pair of Banach spaces with  property  (*),  a closed subset $A\subset X$   and
  a mapping $f:A\to B_Z(0,R)$ (with $R\in (0,\infty]$)  satisfying the mean value  condition for a map $D:A\to \LsXZ$.
 Then, for every $\varepsilon>0$ there exists a $C^1$ smooth  mapping $h:X\to B_Z(0,R+\varepsilon)$ such that
\begin{enumerate}
\item[{(i)}] $||f(y)-h(y)||< \varepsilon$ for all  $y\in A$,
\item[{(ii)}] $||D(y)-h'(y)||< \varepsilon$ for all $y\in A$, and
\item[(iii)] $\Lip(f-h_{\mid_A}) < \varepsilon$.
%\item[{(iii)}] Furthermore, assume $f$ is Lipschitz on $A$,   satisfies \emph{mean value condition for a bounded map} on $A$ with $M=\sup\{||D(y)||_{\LsXZ}:y\in A\}<\infty$, and $F$ is a Lipschitz extension of $f$ to $X$. Then the mapping $G$ can be chosen to be Lipschitz on $X$ and  $\Lip(G)\leq \frac{\varepsilon}{4}+(1+C_1)M+C_1\Lip(F)$. (Where $C_1$ is the constant given by Lemma \ref{lemma:approximation}.)
\end{enumerate}
\end{lem}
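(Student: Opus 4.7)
The plan is to mimic the scalar argument of \cite[Lemma 2.4]{MarLuis}: locally linearize $f$ via the affine maps $x\mapsto f(y)+D(y)(x-y)$ supplied by the mean value condition, and glue these pieces together with a $C^1$ smooth partition of unity on $X$. Such partitions are available because $X$ inherits property~(*) from $(X,Z)$, as noted just after Theorem~\ref{theorem:Lipschitz:extension}; this is precisely the input that made the real-valued case work.

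I fix $\varepsilon>0$ and a small parameter $\delta>0$ to be tuned at the end. For each $y\in A$, the mean value condition together with the continuity of $D$ yields $r_y>0$ such that $\|f(z)-f(w)-D(y)(z-w)\|\le\delta\|z-w\|$ for $z,w\in A\cap B(y,r_y)$ and $\|D(z)-D(y)\|\le\delta$ for $z\in A\cap B(y,r_y)$; I further shrink $r_y$ so that $r_y\|D(y)\|<\delta$ and $r_y<\delta$. I then take a locally finite refinement of the cover $\{B(y,r_y/2):y\in A\}\cup\{X\setminus\overline{A}\}$ and a $C^1$ smooth subordinate partition of unity $\{\psi_i\}_{i\in I}$ such that each $\supp\psi_i$ is either contained in some $B(y_i,r_{y_i}/2)$ with $y_i\in A$ (indices $i\in I_A$) or in $X\setminus\overline{A}$, each point meets at most $N$ supports, and $\|\psi_i'(x)\|\le K/r_{y_i}$; this ``controlled'' partition of unity, built from property~(*) of $X$ as in \cite{MarLuis}, is the main technical input.

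I then set $h(x)=\sum_{i\in I_A}\psi_i(x)\,[f(y_i)+D(y_i)(x-y_i)]$, which is $C^1$ on $X$ as a locally finite sum of $C^1$ maps. For $y\in A$, the identity $\sum_{i\in I_A}\psi_i\equiv1$ in a neighbourhood of $A$ gives $\|h(y)-f(y)\|\le\delta\,r_y/2$ via the mean value inequality applied termwise, while the companion identity $\sum_i\psi_i'\equiv0$ there lets me split $h'(y)-D(y)$ into a ``derivative-of-$\psi_i$'' piece, whose operator norm is bounded by $N\cdot(K/r_{y_i})\cdot\delta(r_{y_i}/2)=NK\delta/2$ (using the mean value bound on the bracket $f(y_i)+D(y_i)(y-y_i)-f(y)$), and a ``derivative-of-affine-factor'' piece $\sum_i\psi_i(y)[D(y_i)-D(y)]$, bounded by $\delta$ via the continuity of $D$; hence $\|h'(y)-D(y)\|\le(NK/2+1)\delta$. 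An analogous two-variable calculation yields $\Lip(f-h_{|A})\le C_{N,K}\,\delta$. For the range bound, whenever $\psi_i(x)>0$ with $i\in I_A$ one has $\|f(y_i)+D(y_i)(x-y_i)\|\le R+r_{y_i}\|D(y_i)\|/2<R+\delta/2$, and the convexity of the partition sum (padded by zero off $I_A$) gives $\|h(x)\|<R+\varepsilon$ throughout $X$. Choosing $\delta$ small handles (i)--(iii) simultaneously; the only real obstacle is the Whitney-type partition of unity with the gradient bound $\|\psi_i'\|\lesssim1/r_{y_i}$, and once that ingredient is imported from \cite{MarLuis}, the remaining computation is a direct transcription of the real-valued argument.
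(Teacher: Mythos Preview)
Your overall strategy---glue the affine maps $T_y(x)=f(y)+D(y)(x-y)$ with a smooth partition of unity---is the same as the paper's, and your termwise computations for (i), (ii), (iii) and the range bound would go through \emph{if} the partition of unity you postulate were available. The genuine gap is precisely there: you assume a partition $\{\psi_i\}$ subordinate to $\{B(y_i,r_{y_i}/2)\}\cup\{X\setminus\overline A\}$ with \emph{uniformly bounded multiplicity} $N$ and gradient bounds $\|\psi_i'\|\le K/r_{y_i}$, and you say this can be ``imported from \cite{MarLuis}.'' It cannot. In an infinite-dimensional Banach space there is no Besicovitch-type covering lemma, and the partition actually supplied by \cite[Lemma~2.2]{MarLuis} (properties (P1)--(P3) in the present paper) has \emph{un}bounded local multiplicity (each point meets at most $n_x$ supports, with $n_x$ depending on $x$) and Lipschitz constants $\Lip(\psi_{n,\beta})\le C_02^5(2^n-1)$ that grow with the level $n$ and are in no way tied to the radii $r_{y_i}$. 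With that partition your key sum $\sum_i\|\psi_i'(y)\|\cdot\bigl(\text{bounded constant}\bigr)$ simply diverges, and the derivative estimate (ii) collapses.

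The paper repairs this with two ingredients you have omitted. First, one fixes a continuous Tietze extension $F:X\to B_Z(0,R)$ of $f$, providing a single reference mapping against which all local pieces are measured. Second---and this is the crucial idea---for each pair $(n,\gamma)$ one invokes Lemma~\ref{lemma:approx} to approximate $T_\gamma-F$ by a $C^1$ map $\delta_{n,\gamma}$ with \emph{global} error
\[
\|T_\gamma(x)-F(x)-\delta_{n,\gamma}(x)\|<\frac{\varepsilon}{2^{n+2}L_{n,\gamma}}\quad\text{for all }x\in X,
\]
where $L_{n,\gamma}=\max\{\Lip(\psi_{n,\gamma}),1\}$, and with $\|\delta_{n,\gamma}'\|\le\varepsilon/8$ on $B_\gamma\cap A$. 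Setting $\Delta_\gamma^n=T_\gamma-\delta_{n,\gamma}$ (and $\Delta_0^n=F_0$ a $C^1$ approximation of $F$) and $h=\sum_{n,\beta}\psi_{n,\beta}\Delta_\beta^n$, the dangerous sum in $h'(y)-D(y)$ becomes, after centring at $F(y)$ via $\sum\psi_{n,\beta}'=0$,
\[
\sum_n\|\psi_{n,\gamma(n)}'(y)\|\,\|\Delta_{\gamma(n)}^n(y)-F(y)\|\le\sum_n L_{n,\gamma(n)}\cdot\frac{\varepsilon}{2^{n+2}L_{n,\gamma(n)}}<\frac{\varepsilon}{4},
\]
which now converges. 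In short, the growing Lipschitz constants of the only partition of unity one actually has are neutralised not by bounded multiplicity (which is unavailable) but by making each local piece $\Delta_\beta^n$ agree with the common reference $F$ to within $\varepsilon/(2^{n+2}L_{n,\beta})$; that correction step, and the Tietze extension that makes it possible, are what your proposal is missing.
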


\begin{proof}
Since $A$ is closed, by a vector-valued version of the Tietze theorem (see for instance  \cite[Theorem 6.1]{Dugundji}) there is  a continuous extension
 $F:X\to B_Z(0,R)$  of $f$. Since $X$ is a Banach space, $A\subset X$ is a closed subset and $f$ satisfies {mean value condition} for $D:A\to \LsXZ$ on $A$, there exists $\{B(y_\gamma,r_\gamma)\}_{\gamma\in\Gamma}$  a covering of $A$ by open balls of $X$,    with centers $y_\gamma \in A$,  such that

\begin{equation}\label{condi}
||D(y)-D(y_\gamma)||\le\frac{\varepsilon}{8C_0}
\quad
\text{  and  }
\quad
||f(z)-f(w)-D(y_\gamma)(z-w)|| \le \frac{\varepsilon}{8C_0}  ||z-w|| ,\end{equation}
%%%%%%%%%%
 for every $y,z,w\in B_\gamma\cap A$, where $B_{\gamma}:=B(y_\gamma,r_\gamma)$
 and $C_0$   is the constant given by  property (*).

%\end{itemize}
%\end{enumerate}
Let us define $T_\gamma : X\rightarrow Z$  by
$T_\gamma(x)=f(y_\gamma)+D(y_\gamma)(x-y_\gamma)$, for every $x\in X$. Notice that  $T_\gamma$ satisfies the following properties:
    \begin{enumerate}
    \item[(B.1)] $T_\gamma$ is ${C}^\infty$ smooth on $X$,
    \item[(B.2)] $T'_\gamma(x)=D(y_\gamma)$ for all $x\in X$, and %\,   $T'_\gamma(y)\mid_Y=f'(y_\gamma)$ for every $y\in Y$, and
    \item[(B.3)]  $\Lip((T_\gamma-F){\mid_{B_{\gamma}\cap A}})\le\frac{\varepsilon}{8C_0}$, since  for all     $z,w \in B_\gamma \cap A$,
\begin{equation*}
||(T_\gamma-F)(z)-(T_\gamma-F)(w)||=||f(w)-f(z)-D(y_\gamma)(w-z)|| \le\frac{\varepsilon}{8C_0}  ||z-w||.
\end{equation*}
    \end{enumerate}
%Notice that the radii are chosen using the continuity of $f$ and $f'$ on $Y$ and the continuity of the extension $F$ on $X$.

 Since $F:X\to B_Z(0,R)$ is a continuous mapping and $X$ admits $C^1$ smooth partitions of unity, there is a $C^1$ smooth mapping $F_0:X\to Z$ such that $||F(x)-F_0(x)||<\frac{\varepsilon}{2}$ for every $x\in X$.

Let us denote $B_0:=X\setminus A$, $\Sigma:=\Gamma\cup \{0\}$ (we assume $0\notin \Gamma$), and  $\mathcal{C}:=\{B_{\beta} : \beta\in \Sigma\}$, which is a covering of $X$.  By \cite{Rudin} and \cite[Lemma 2.2]{MarLuis}, there are an open refinement  $\{W_{n,\beta}\}_{n\in\mathbb{N},\beta\in\Sigma}$ of $\mathcal{C}=\{B_\beta : \beta\in\Sigma\}$ and a ${C}^1$ smooth and Lipschitz partition of unity $\{\psi_{n,\beta}\}_{n\in\mathbb{N},\beta\in\Sigma}$  satisfying: 

\begin{enumerate}
\item[{ (P1)}]
$\supp (\psi_{n,\beta}) \subset
W_{n,\beta}\subset B_\beta$; 

\item[{ (P2)}] $\Lip(\psi_{n,\beta})\le C_02^5(2^n-1)$ for every $(n,\beta)\in \mathbb N \times \Sigma$;
 and 
 
\item[{  (P3)}] for each $x\in X$ there is an open ball $B(x,s_x)$ of $X$ with center $x$ and radius $s_x>0$, and a natural number $n_x$ such that
     \begin{enumerate}
         \item[{ (1)}] if $i>n_x$, then $B(x,s_x)\cap W_{i,\beta}=\emptyset$ for every $\beta\in\Sigma$,
         \item[{ (2)}] if $i\leq n_x$, then $B(x,s_x)\cap W_{i,\beta}\neq\emptyset$ for at most one $\beta\in \Sigma$.
     \end{enumerate}
          \end{enumerate}

%and satisfying
%\begin{itemize}
%\item[(a)] $\supp \ {\psi_{n,\beta}}\subset W_{n,\beta}\subset B_\beta:=B_{2r_\beta}$, whenever $\beta\in\Sigma$.
%\item[(b)] $\{\supp \ {\psi_{n,\beta}} \}_{n\in\mathbb{N},\beta\in\Sigma}$ is locally finite, and
%\item[(c)] $\sum_{\mathbb N\times \Sigma} \psi_{n,\beta}(x)=1$ for every $x\in X$.
%\end{itemize}

Let us define $L_{n,\beta}:=\max\{\Lip(\psi_{n,\beta}),1\}$ for every $n\in\mathbb{N}$ and $\beta\in\Sigma$.
Now, for every $n\in\mathbb{N}$ and  $\gamma\in\Gamma$, we apply Lemma \ref{lemma:approx} to $T_\gamma-F$ on $B_{\gamma}\cap A$  to obtain a $C^1$ smooth   mapping $\delta_{n,\gamma}:X\to Z$ so that
\begin{equation} \tag{C.1}
||T_\gamma(x)-F(x)-\delta_{n,\gamma}(x)||<\frac{\varepsilon}{2^{n+2}L_{n,\gamma}} \ \text{ for every } x\in X,
\end{equation}

\begin{equation} \tag{C.2}
 \|\delta'_{n,\gamma}(y)\|\le \frac{\varepsilon}{8} \ \text{ for every } y\in B_\gamma\cap A
\end{equation}

and
 \begin{equation}\tag{C.3}
 \Lip((\delta_{n,\gamma})_{\mid_{B_\gamma\cap A}})\le \frac{\varepsilon}{8}.
 \end{equation}

From inequality  \eqref{condi}, (B.2), (C.2) and (C.3),   we have, for all $y\in B_{\gamma}\cap A$,
%%%%%%%%%%%%%%
\begin{equation*}
\|T'_\gamma(y)-D(y)-\delta'_{n,\gamma}(y)\| \leq \|T_\gamma'(y)-D(y)\|+\|\delta_{n,\gamma}'(y)\| \le  \frac{\varepsilon}{4},
\end{equation*}
and
\begin{equation*}
\Lip((T_\gamma-F-\delta_{n,\gamma})_{\mid_{B_\gamma\cap A}}) \le \frac{\varepsilon}{4}.
 \end{equation*}

\medskip

Let us define $\Delta_\beta^n:X\rightarrow Z$,
\begin{equation}\label{deltanbeta}
\Delta_\beta^n(x)=
\begin{cases}
F_0(x) & \text{ if } \beta=0, \\
T_\beta(x)-\delta_{n,\beta}(x)  & \text{ if } \beta\in \Gamma.
\end{cases}
\end{equation}
Thus,
$||F(x)-\Delta_\beta^n(x)||<\frac{\varepsilon}{2}$  whenever $n\in \mathbb{N}$,  $\beta\in\Sigma$ and  $x\in X$.
Now, we define
\begin{equation*}
h(x)=\sum_{(n,\beta)\in\mathbb{N} \times \Sigma} \psi_{n,\beta}(x)\Delta^n_\beta(x).
\end{equation*}
Since $\{\psi_{n,\beta}\}_{n\in\mathbb{N},\beta\in\Sigma}$ is locally finitely nonzero,  $h$ is ${C}^1$ smooth.
%Now, if  $x_0\in X$, let us fix an open neighborhood  $U(x_0)$ of $x_0$ such that the set
%$F_{x_0}:=\{(n,\beta)\in\mathbb{N}\times\Sigma : U(x_0)\cap \supp \psi_{n,\beta}\not=\emptyset \}$ is finite.
Now, if $x\in X$, then
\begin{equation*}
||F(x)-h(x)||\leq \sum_{(n,\beta)\in\mathbb{N} \times \Sigma}\psi_{n,\beta}(x)||F(x)-\Delta_\beta^n(x)||\leq \sum_{(n,\beta)\in\mathbb{N} \times \Sigma} \psi_{n,\beta}(x)\frac{\varepsilon}{2}<\varepsilon.
\end{equation*}
Therefore, $||h(x)|| < R+\varepsilon$ for all $x\in X$ (recall that $||F(x)||\le R$ for all $x\in X$).
%%%%
%%
Following the proof of \cite[Theorem 3.3]{MarLuis}, it can be cheked that
\begin{equation*}
 \|D(y)-h'(y)\| <\varepsilon \text{ for all } y\in A  \text{ and }  \Lip(f-h_{\mid_A})<\varepsilon.
 \end{equation*}
%%%%%
%%%%
\end{proof}

%%%%%%%%%%%%
%%%%%%%%%%%%
%%%%%%%%%%%%
%%%%%%%%%%%%

\begin{lem}\label{theorem:clave}
Let $(X,Z)$ be a pair of Banach spaces with  property (*), a closed subset  $A\subset X$     and  a Lipschitz mapping $f:A\to B_Z(0,R)$ (with $R\in (0,\infty]$)  satisfying the mean value  condition for a bounded map $D:A\to \LsXZ$ with $M=\sup\{||D(y)||:y\in A\}<\infty$.  Then, for every $\varepsilon>0$ there is a  $C^1$ smooth and Lipschitz mapping $g:X\to Z$ such that
\begin{enumerate}
\item[(i)] $||f(y)-g(y)||<\varepsilon$ { for every}  $y\in A$,
\item[(ii)] $||D(y)-g'(y)||<\varepsilon$ for every  $y\in A$,
\item[(iii)] $\Lip(f-g_{\mid_A})<\varepsilon$,
\item[(iv)] $||g(x)||  < C_0\Lip(f)^{1/2}+ R+\varepsilon$ for every $x\in X$,
\item[(v)]  $\Lip(g) \le C_0( (1+2C_0)\Lip(f)+2(R+\varepsilon)\Lip(f)^{1/2}+M )+\varepsilon$ whenever $R<\infty$, and $\Lip(g)\le C_0(M+\Lip(f))+\varepsilon$ whenever $R=+\infty$;  where $C_0$ is the constant given by property (*).
\end{enumerate}
\end{lem}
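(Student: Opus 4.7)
The plan is to follow the partition-of-unity construction of Lemma \ref{lema:clave} step by step, with two adjustments so that the final map is globally Lipschitz with the sharp bound.  First, in place of the (only continuous) Tietze extension $F$ of $f$, we use a globally $C^1$-smooth \emph{and} Lipschitz background map $F_0\colon X\to Z$.  When $R<\infty$, we obtain $F_0$ from Lemma \ref{lemma:approx:bounded} applied to $f$ with a small tolerance $\eta>0$, giving $\|F_0(x)-f(x)\|<\eta$ for $x\in A$, $\|F_0(x)\|<C_0\Lip(f)^{1/2}+R+\eta$, and $\Lip(F_0)\le C_0((1+2C_0)\Lip(f)+2(R+\eta)\Lip(f)^{1/2})$; when $R=+\infty$ we apply property (*) directly to the Lipschitz mapping $f$, giving $\Lip(F_0)\le C_0\Lip(f)$ and $\|F_0(x)-f(x)\|<\eta$ on $A$.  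Second, in place of the correction $\delta_{n,\gamma}$ that Lemma \ref{lema:clave} produces from Lemma \ref{lemma:approx} applied to $T_\gamma-F$, we apply property (*) to the \emph{Lipschitz-small} mapping $\phi_\gamma:=(T_\gamma-f)_{\mid_{B_\gamma\cap A}}$, whose Lipschitz constant is at most $\varepsilon/(8C_0)$ by \eqref{condi}.  This yields, for each $\gamma\in\Gamma$ and $n\in\mathbb N$, a $C^1$-smooth and globally Lipschitz map $\delta_{n,\gamma}\colon X\to Z$ with $\Lip(\delta_{n,\gamma})\le\varepsilon/8$ and $\|\delta_{n,\gamma}(x)-\phi_\gamma(x)\|<\eta_{n,\gamma}$ on $B_\gamma\cap A$, where $\eta_{n,\gamma}<\varepsilon/(2^{n+2}L_{n,\gamma})$ can be taken arbitrarily small.

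Using the same local data as in Lemma \ref{lema:clave} (the cover $\{B_\gamma\}_{\gamma\in\Gamma}$ of $A$ from \eqref{condi}, the affine maps $T_\gamma(x)=f(y_\gamma)+D(y_\gamma)(x-y_\gamma)$, the refinement $\{W_{n,\beta}\}$, and the $C^1$-smooth partition of unity $\{\psi_{n,\beta}\}_{(n,\beta)\in\mathbb N\times\Sigma}$ satisfying (P1)--(P3)), I set $\Delta_0^n:=F_0$, $\Delta_\gamma^n:=T_\gamma-\delta_{n,\gamma}$ for $\gamma\in\Gamma$, and
\[
g(x):=\sum_{(n,\beta)\in\mathbb N\times\Sigma}\psi_{n,\beta}(x)\Delta_\beta^n(x),
\]
a $C^1$-smooth mapping by (P3).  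Properties (i), (ii) and (iii) follow exactly as in the proof of Lemma \ref{lema:clave}, using that $\psi_{n,0}$ vanishes on $A$, and that each $\Delta_\gamma^n$ approximates $f$ in value (within $\eta_{n,\gamma}$) and in slope (within $\varepsilon/8$) on $A\cap B_\gamma$.  For (iv), on the support of $\psi_{n,\gamma}$ the mapping $\Delta_\gamma^n$ stays within an arbitrary small error of $F_0$ (using the small Lipschitz constant of $\delta_{n,\gamma}$ and the radius $r_\gamma$), whence $\|g(x)\|\le\|F_0(x)\|+\varepsilon/2<C_0\Lip(f)^{1/2}+R+\varepsilon$.

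The main obstacle is (v).  Differentiating $g$ and using $\sum\psi_{n,\beta}=1$ together with $\sum\psi'_{n,\beta}=0$, one gets
\[
g'(x)=\sum_{(n,\beta)}\psi'_{n,\beta}(x)\bigl(\Delta_\beta^n(x)-F_0(x)\bigr)+\sum_{(n,\beta)}\psi_{n,\beta}(x)(\Delta_\beta^n)'(x).
\]
The second sum is a convex combination whose summands have norm at most $\max(\|F_0'(x)\|,\|D(y_\gamma)\|+\|\delta'_{n,\gamma}(x)\|)\le\Lip(F_0)+M+\varepsilon/8$.  The delicate sum is the first one: to push it below $\varepsilon$, the estimate $\|\Delta_\gamma^n(x)-F_0(x)\|<\varepsilon/(2^{n+2}L_{n,\gamma})$ is required on the whole support of $\psi_{n,\gamma}$, not only on $B_\gamma\cap A$.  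The strategy is to shrink the radii $r_\gamma$ of the cover balls so that $(M+\Lip(F_0))r_\gamma<\varepsilon/(2^{n+2}L_{n,\gamma})$ for the finitely many $(n,\gamma)$ relevant at each $x$ (a choice that is compatible with (P3)), and then to absorb the residual $\|F_0-f\|$-discrepancy into the tolerances $\eta$ and $\eta_{n,\gamma}$.  With this bookkeeping the two sums add up to at most $\Lip(F_0)+C_0M+\varepsilon$, which in view of the choice of $F_0$ is the bound claimed in (v).
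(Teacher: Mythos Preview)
Your approach has a genuine gap in the verification of (v), and the ``shrink the radii'' step is where it breaks. The quantity you need to make small on the whole support of $\psi_{n,\gamma}$ is $\|\Delta_\gamma^n(x)-F_0(x)\|$. Tracking your construction on $B_\gamma$ one gets
\[
\|\Delta_\gamma^n(x)-F_0(x)\|\le (M+\Lip(F_0)+\tfrac{\varepsilon}{8})\,r_\gamma+\eta+\eta_{n,\gamma},
\]
because $\delta_{n,\gamma}$ is only controlled on $B_\gamma\cap A$ and must be propagated to $B_\gamma$ via its Lipschitz constant. You are free to choose $\eta_{n,\gamma}$ depending on $n$, but $r_\gamma$ and $\eta$ are fixed \emph{before} the refinement $\{W_{n,\beta}\}$ and the partition $\{\psi_{n,\beta}\}$ are built; they cannot depend on $n$. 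Since $L_{n,\gamma}$ may be as large as $C_0 2^5(2^n-1)$ and there is no a priori bound on the indices $n$ for which $W_{n,\gamma}\neq\emptyset$, the requirement $(M+\Lip(F_0))r_\gamma<\varepsilon/(2^{n+2}L_{n,\gamma})$ forces $r_\gamma=0$. The phrase ``a choice that is compatible with (P3)'' is circular: (P3), $n_x$ and $L_{n,\gamma}$ are outputs of the partition-of-unity construction, which in turn takes the cover $\{B(y_\gamma,r_\gamma)\}$ as input. The same issue already affects your argument for (iv) when you claim $\Delta_\gamma^n$ is ``within an arbitrary small error of $F_0$'' on $\operatorname{supp}\psi_{n,\gamma}$.

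The paper avoids this precisely by \emph{not} working directly with $f$. It first invokes Lemma~\ref{lema:clave} to replace $f$ by a globally $C^1$ map $h:X\to B_Z(0,R+\varepsilon')$ that already satisfies (i)--(iii) with tolerance $\varepsilon'$, and then runs the partition-of-unity machinery on $h$ instead of $f$: the affine models are $T_\gamma(x)=h(y_\gamma)+h'(y_\gamma)(x-y_\gamma)$, and because $h$ is $C^1$ on all of $X$, the difference $T_\gamma-h$ is Lipschitz-small on the \emph{entire} ball $B_\gamma$, not just on $B_\gamma\cap A$. Property~(*) then yields $\delta_{n,\gamma}$ with $\|T_\gamma-h-\delta_{n,\gamma}\|<\varepsilon'/(2^{n+2}L_{n,\gamma})$ on all of $B_\gamma$, and the first sum in the expression for $g'(x)$ is controlled without any constraint linking $r_\gamma$ to $n$. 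In short, the intermediate passage through Lemma~\ref{lema:clave} is not a convenience but the mechanism that transfers control from $A$ to the full balls $B_\gamma$; bypassing it is what creates the circular dependence in your argument.
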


%%%%%%%%%%%%%%%%%%%%%%%%%%%%%%%%%%%%%%%%%%%%%%%%
%%%%%%%%%%%%%%%%%%%%%%%%%%%%%%%%%%%%%%%%%%%%%%%%
%%%%%%%%%%%%%%%%%%%%%%%%%%%%%%%%%%%%%%%%%%%%%%%%
\begin{proof}
Let us suppose that $R<+\infty$, and take $0<3\varepsilon'<\varepsilon$. By Lemma \ref{lema:clave} there is a $C^1$ smooth mapping $h:X\to B_Z(0,R+\varepsilon')$ such that
\begin{enumerate}
\item[{(i)}] $||f(y)-h(y)||< \varepsilon'$ for all $y\in A$,
\item[{(ii)}] $||D(y)-h'(y)||< \varepsilon'$ for all $y\in A$, and
\item[{(iii)}] $\Lip(f-h_{\mid_A}) < \min\{\frac{\varepsilon'}{C_0(1+2C_0)},(\frac{\varepsilon'}{2C_0(R+2\varepsilon')})^{2}\}$.
\end{enumerate}

%If we find a $C^1$ smooth mapping $g:X\to Z$ such that
%\begin{enumerate}
%\item[(i)] $||h(y)-g(y)||<\varepsilon'$ { for every}  $y\in A$,
%\item[(ii)] $||g(x)|| \le C_0\Lip(f)^{1/2}+ R+3\varepsilon'$ for all $x\in X$,
%\item[(ii)] $||h'(y)-g'(y)||_{\LsXZ}<\varepsilon'$ for all $y\in A$,
%\item[(iii)] $\Lip((h-g)_{\mid_A})<\varepsilon'$, and
%\item[(iv)] $\Lip(g) < C_0((1+2C_0)\Lip(f)+2(R+\varepsilon')\Lip(f)^{1/2}+M)+3\varepsilon'$,
%\end{enumerate}
%we will  have finished.

Since $h$ is $C^1$ smooth on $X$,   there exists $\{B(y_\gamma,r_\gamma)\}_{\gamma\in\Gamma}$  a covering of $A$ by open balls of $X$,    with centers $y_\gamma \in A$ such that

\begin{equation}\label{condii}
||h(y)-h(y_\gamma)||\le\frac{\varepsilon'}{8C_0}
\quad
\text{  and  }
\quad 
||h'(y)-h'(y_\gamma)||\le\frac{\varepsilon'}{8C_0},  \text{  for every   } y\in B_\gamma,
\end{equation}
where $B_{\gamma}:=B(y_\gamma,r_\gamma)$ and $ C_0$ is the constant given by property (*) (which depends only on $X$ and $Z$). Let us define $T_\gamma$  by
$T_\gamma(x)=h(y_\gamma)+h'(y_\gamma)(x-y_\gamma)$, for $x\in X$. Notice that  $T_\gamma$ satisfies the following properties:
    \begin{enumerate}
    \item[(B.1)] $T_\gamma$ is ${C}^\infty$ smooth on $X$,
    \item[(B.2)] $T'_\gamma(x)=h'(y_\gamma)$ for all $x\in X$,  %\,   $T'_\gamma(y)\mid_Y=f'(y_\gamma)$ for every $y\in Y$, and
    \item[(B.3)]  $\Lip((T_\gamma-h){\mid_{B_{\gamma}}})\le\frac{\varepsilon'}{8C_0}$, and
    \item[(B.4)] $||T_\gamma'(x)||=||h'(y_\gamma)|| \le ||D(y_\gamma)||+\varepsilon' \le    M+\varepsilon'$ for every $x\in X$.
    \end{enumerate}
%Notice that the radii are chosen using the continuity of $f$ and $f'$ on $Y$ and the continuity of the extension $F$ on $X$.

Let us define $B_0:=X\setminus A$, $\Sigma:=\Gamma\cup \{0\}$ (we assume $0\notin \Gamma$), and  $\mathcal{C}:=\{B_{\beta} : \beta\in \Sigma\}$, which is an open covering of $X$. Following  the proof of Lemma \ref{lema:clave},  we obtain an open refinement  $\{W_{n,\beta}\}_{n\in\mathbb{N},\beta\in\Sigma}$ of $\mathcal{C}=\{B_\beta : \beta\in\Sigma\}$ and a ${C}^1$ smooth and Lipschitz partition of unity $\{\psi_{n,\beta}\}_{n\in\mathbb{N},\beta\in\Sigma}$  satisfying conditions (P1), (P2) and (P3).
%$\supp (\psi_{n,\beta}) \subset
%W_{n,\beta}\subset B_\beta$, $\Lip(\psi_{n,\beta})\le C_02^5(2^n-1)$ for every $(n,\beta)\in \mathbb N \times \Sigma$,
% and for each $x\in X$ there is an open ball $B(x,s_x)$ of $X$ with center $x$ and radius $s_x>0$, and a natural number $n_x$ such that
%     \begin{enumerate}
%         \item if $i>n_x$, then $B(x,s_x)\cap W_{i,\beta}=\emptyset$ for every $\beta\in\Sigma$,
%         \item if $i\leq n_x$, then $B(x,s_x)\cap W_{i,\beta}\neq\emptyset$ for at most one $\beta\in \Sigma$.
%     \end{enumerate}

Let us define $L_{n,\beta}:=\max\{\Lip(\psi_{n,\beta}),1\}$ for every $n\in\mathbb{N}$ and $\beta\in\Sigma$.
Now, for every $n\in\mathbb{N}$ and  $\gamma\in\Gamma$, we apply property (*) to $T_\gamma-h$ on $B_{\gamma}$ in order to obtain a $C^1$ smooth   mapping $\delta_{n,\gamma}:X \to Z$ so that
\begin{equation} \tag{C.1}
||T_\gamma(x)-h(x)-\delta_{n,\gamma}(x)||<\frac{\varepsilon'}{2^{n+2}L_{n,\gamma}} \qquad \text{for every } x\in B_\gamma
\quad \text{ and }
 \end{equation}
\begin{equation} \tag{C.2}
\Lip(\delta_{n,\gamma})\le C_0
\Lip((T_\gamma-h)_{\mid_{B_\gamma}})\le \frac{\varepsilon'}{8}.
\end{equation}
In particular,
\begin{equation}
||T_\gamma(x)-\delta_{n,\gamma}(x)||<||h(x)||+\frac{\varepsilon'}{2^{n+2}L_{n,\gamma}}<R+2\varepsilon' \qquad \text{for every } x\in B_\gamma.
\end{equation}
%
% \begin{equation}\tag{C.3}
% \Lip((\delta_{n,\gamma})_{\mid_{B_\gamma\cap A}})\le \frac{\varepsilon'}{8}
 %\end{equation}
%
From inequality  \eqref{condii}, (B.2) and (C.2)  and  for every  $y\in B_{\gamma}$, we have
\begin{equation*}
\|T'_\gamma(y)-h'(y)-\delta'_{n,\gamma}(y)\| \leq \|T_\gamma'(y)-h'(y)\|+\|\delta_{n,\gamma}'(y)\| \le \frac{\varepsilon'}{4}.
\end{equation*}
Therefore,
\begin{equation*}
\Lip((T_\gamma-h-\delta_{n,\gamma})_{\mid_{B_\gamma }}) \le \frac{\varepsilon'}{4}.
 \end{equation*}

Now, since $\Lip(h_{\mid_A})\le \Lip(f)+(\varepsilon'/C_0)^2$, let us apply Lemma \ref{lemma:approx:bounded}  to $h_{\mid_A}:A\to B_Z(0,R+\varepsilon')$ to obtain $C^1$ smooth and Lipschitz mappings $F^n_0:X\to B_Z(0,C_0\Lip(f)^{1/2}+ R+3\varepsilon')$ such that $||F^n_0(z)-h(z)|| <\frac{\varepsilon'}{2^{n+2}L_{n,0}}$  for all $z\in A$ and $n\in \Natural$, and
%%%%%%%%%%
\begin{equation*}
 \Lip(F^n_0)  \le C_0((1+2C_0)\Lip(h_{\mid_A})+2(R+2\varepsilon')\Lip(h_{\mid_A})^{1/2}).
\end{equation*}
%%%%%%%%
From condition (iii) above, we deduce
\begin{equation*}
\Lip(F^n_0)  \le C_0((1+2C_0)\Lip(f)+2(R+2\varepsilon')\Lip(f)^{1/2})+2\varepsilon'.
\end{equation*}

Let us define $\Delta_\beta^n: X\rightarrow Z$ and $g: X\rightarrow Z$ as
%%%
\begin{equation}\label{deltanbeta}
\Delta_\beta^n(x)=
\begin{cases}
F^n_0(x) & \text{ if } \beta=0, \\
T_\beta(x)-\delta_{n,\beta}(x)  & \text{ if } \beta\in \Gamma,
\end{cases}
\qquad \text{and} \qquad g(x)=\sum_{(n,\beta)\in\mathbb{N} \times \Sigma} \psi_{n,\beta}(x)\Delta^n_\beta(x).
\end{equation}
%%%
%Thus,
%$||\Delta_\beta^n(x)-h(x)||<\frac{\varepsilon'}{2}$ whenever $n\in \mathbb{N}$,  $\beta\in \Gamma$ and  $x\in B_\beta$.
%We now define
%\begin{equation*}
%g(x)=\sum_{(n,\beta)\in\mathbb{N} \times \Sigma} \psi_{n,\beta}(x)\Delta^n_\beta(x).
%\end{equation*}
Since $\{\psi_{n,\beta}\}_{n\in\mathbb{N},\beta\in\Sigma}$ is locally finitely nonzero, the mapping $g$ is ${C}^1$ smooth. It is clear that
\begin{equation*}
||g(x)||\le \sum_{(n,\beta)\in\mathbb{N} \times \Sigma} \psi_{n,\beta}(x)||\Delta^n_\beta(x)||Ê< C_0\Lip(f)^{1/2}+ R+\varepsilon \qquad \text{for all $x\in X$.}
\end{equation*}
%Now, if  $x_0\in X$, let us fix an open neighborhood  $U(x_0)$ of $x_0$ such that the set
%$F_{x_0}:=\{(n,\beta)\in\mathbb{N}\times\Sigma : U(x_0)\cap \supp \psi_{n,\beta}\not=\emptyset \}$ is finite.
The proofs of $||h(y)-g(y)||<\varepsilon'$, \,  $||h'(y) -g'(y)|| <\varepsilon'$  for all $y\in A$ and  $\Lip((h-g)_{\mid_A})<\varepsilon'$ follow along the same lines as \cite[Theorem 3.3]{MarLuis}. Thus, $||f(y)-g(y)||<\varepsilon$ for all $y\in A$, $||D(y) -g'(y)|| <\varepsilon$  for $y\in A$, and  $\Lip(f-g_{\mid_A})<\varepsilon$.

In addition, since $||T_\gamma'(x)||+||\delta_{n,\gamma}'(x)||\le  M+9\varepsilon'/8$,
%%%%%%%%%%%%%%%%
\begin{align*}
||(\Delta_\beta^n)'(x)||\leq & \max\{C_0 ((1+2C_0)\Lip(f)+2(R+2\varepsilon')\Lip(f)^{1/2}) +2\varepsilon',  M+9\varepsilon'/8\} \\
 \le & C_0 ((1+2C_0)\Lip(f)+2(R+2\varepsilon')\Lip(f)^{1/2}+M)+2\varepsilon'.
 \end{align*}
Let us check that $g$ is Lipschitz. From   the fact that  $\sum_{(n,\beta)\in F_x}\psi'_{n,\beta}(x)=0$ for all $x\in X$, where $F_x:=\{(n,\beta)\in \mathbb N\times \Sigma: x\in \supp  (\psi_{n,\beta})\}$ and the fact (P3) we deduce that
%%%%%
%%%%%%%%%%%%%%%%%
\begin{align*}\label{cotaparaG'}
& ||g'(x)|| \le  \sum_{(n,\beta)\in F_x}||\psi'_{n,\beta}(x)||\,||h(x)-\Delta_\beta^n(x)||  + \sum_{(n,\beta)\in F_x}\psi_{n,\beta}(x)||(\Delta_\beta^n)'(x)|| \\  \notag
  &  \le  \sum_{\{n:(n,\beta(n))\in F_x\}}L_{n, \beta(n)}\, \frac{\varepsilon'}{2^{n+2}L_{n, \beta(n)}} \\ \notag
&      + \sum_{\{n:(n,\beta(n))\in F_x\}}\psi_{n,\beta(n)}(x)(C_0((1+2C_0)\Lip(f)+2(R+2\varepsilon')\Lip(f)^{1/2}+M)+2\varepsilon')  \\  \notag
& <C_0 ((1+2C_0)\Lip(f)+2(R+2\varepsilon')\Lip(f)^{1/2}+M) +3\varepsilon',
\end{align*}
for all $x\in X$, where $\beta(n)$ is the only index $\beta$ (if there exists) satisfying condition (P3)-(2) for $x$. Thus, $\Lip(g)\le C_0 ((1+2C_0)\Lip(f)+2(R+\varepsilon)\Lip(f)^{1/2}+M)+\varepsilon$. (Recall, that here we do not assume $\varepsilon < \Lip(f)$.)

%%%%%%%%%
%%%%%%
%%%%%%%%%

If   $R=+\infty$, we apply property (*) to $h_{\mid_A}:A\to Z$ in order to obtain $C^1$ smooth mappings $F^n_0:X\to Z$ such that $||h(x)-F^n_0(x)||<\frac{\varepsilon'}{2^{n+2}L_{0,\beta}} $ on $A$, and   $\Lip(F^n_0)\le C_0\Lip(h_{\mid_A})\le C_0\Lip(f)+\varepsilon'$. Thus,  $||(\Delta_\beta^n)'(x)||\leq \max\{C_0\Lip(f)+\varepsilon',  M+9\varepsilon'/8\}\le C_0 (\Lip(f)+M) +9\varepsilon'/8$ and $||g'(x)||\le C_0(\Lip(f)+M)+\varepsilon$ for every $x\in X$.
%%%%%%%%%
\end{proof}

%%%%%%%%%%%%%%%%%%%%%
%%%%%%%%%%%%%%%%%%%

\medskip

\noindent {\em Proofs of Theorems \ref{theorem:extension} and \ref{theorem:Lipschitz:extension}.} Let us assume that  the mapping $f:A\to Z$ satisfies the mean value condition and consider $0<\varepsilon<1$. %Since $A$ is closed, by a vector-valued version of the Tietze theorem (see for instance  \cite[Theorem 6.1]{Dugundji}) there is  a continuous extension  $F:X\to Z$  of $f$.
Then, by Lemma \ref{lema:clave} there exists a $C^1$ smooth mapping $G_1:X\to Z$ such that  if  $g_1:={G_1}_{\mid_A}$, then
%%%
\begin{enumerate}
\item[(i)]  $||f(y)-g_1(y)|| < \frac{\varepsilon}{2^4C_0}$ for every $y\in A$,
\item[(ii)] $||D(y)-G_1'(y)|| < \frac{\varepsilon}{2^4C_0}$ for every $y\in A$, and
\item[(iii)] $\Lip(f-g_1) < \min\{\frac{\varepsilon}{2^4C_0(1+2C_0)}, (\frac{\varepsilon}{2^4C_0})^2\}$.
%\item[(iv)] $\Lip(G_1)\le C_0(M+\Lip(f)) + \frac{\varepsilon}{2^4}$.
\end{enumerate}

Notice that the mapping $f-{g_1}$   satisfies the {mean value condition for the bounded map}
  $D-G_1':A\to \LsXZ$ with $\sup\{||D(y)-G_1'(y)||:y\in A\}\le \frac{\varepsilon}{2^4C_0}$. Let us apply 
  Lemma  \ref{theorem:clave} to $f-g_1$ to obtain  a $C^1$ smooth mapping $G_2:X\to Z$ such that if
 $g_2:={G_2}_{\mid_A}$, then
   \begin{enumerate}
\item[(i)]  $||{(f-g_1)}(y)-g_2(y)|| < \frac{\varepsilon}{2^5C_0}$ for every $y\in A$,
\item[(ii)]$||D(y)-(G_1'(y)+G_2'(y))|| < \frac{\varepsilon}{2^5C_0}$ for every $y\in A$,
\item[(iii)] $\Lip(f-(g_1+g_2)) <  \min\{\frac{\varepsilon}{2^5C_0(1+2C_0)}, (\frac{\varepsilon}{2^5C_0})^2\}$, 
\item[(iv)] $||G_2(x)||\le C_0 \frac{\varepsilon}{2^4C_0}+\frac{\varepsilon}{2^4C_0}+\frac{\varepsilon}{2^5 C_0} \le \frac{\varepsilon}{2^2}$ for all $x\in X$, and
\item[(v)]${ \Lip(G_2)  \le C_0((1+2C_0)\frac{\varepsilon}{2^4C_0(1+2C_0)}}+2(\frac{\varepsilon}{2^4C_0}+\frac{\varepsilon}{2^5C_0})\frac{\varepsilon}{2^4C_0}+\frac{\varepsilon}{2^4C_0})  +\frac{\varepsilon}{2^5C_0} \le \frac{\varepsilon}{2^2}$.
\end{enumerate}

  % a Lipschitz and bounded extension in $X$, which is denoted by  $\overline{(f-G_1)}:X\to Z$ and satisfies that $\Lip(\,\overline{(f-G_1)}\,)\le 2K\frac{\varepsilon}{2^2K(1+C_1)}=\frac{\varepsilon}{2(1+C_1)}$ and  $||\overline{(f-G_1)}(x)||\le \frac{\varepsilon}{2^2K(1+C_1)}$ for every $x\in X$.

 By induction, we find a sequence  $G_n:X\to Z$  of $C^1$ smooth mappings satisfying for  $n\ge 2$, where $g_n:={G_n}_{\mid_A}$, 
\begin{itemize}
\item[(i)]  $||{(f-\sum_{i=1}^{n-1} g_i)}(y)-g_n(y)|| < \frac{\varepsilon}{2^{n+3}C_0}$ for every $y\in A$,
\item[(ii)] $||D(y)-\sum_{i=1}^n G_i'(y)|| < \frac{\varepsilon}{2^{n+3}C_0}$ for every $y\in A$,
\item[(iii)] $\Lip (f-\sum_{i=1}^{n} g_i) < \min\{\frac{\varepsilon}{2^{n+3}C_0(1+2C_0)}, (\frac{\varepsilon}{2^{n+3}C_0})^2\}$, 
\item[(iv)] $||G_n(x)||\le \varepsilon/2^n$ for all $x\in X$, and
\item[(v)] $\Lip(G_n)\le  \varepsilon/2^{n}$.
\end{itemize}

It can be cheked as in \cite{Azafrykeener} and \cite{MarLuis} that the mapping $G:X\to Z$ defined  as $G(x):=\sum_{n=1}^\infty G_n(x)$ is $C^1$ smooth and it is an extension of $f$ to $X$.

\smallskip

Let us now consider $f:A\to Z$  a Lipschitz mapping satisfying the mean value condition for a bounded map $D:A\to \LsXZ$
  with $M:=\sup\{||D(y)||:y\in A\}<\infty$. We can assume  that $\varepsilon \le \frac{16(M+\Lip(f))}{9}$ (if $\Lip(f)=0$, the extension is obvious).  By Lemma \ref{theorem:clave}, there exists a $C^1$ smooth mapping $G_1:X\to Z$ such that if  $g_1:={G_1}_{\mid_A}$
%%%
\begin{enumerate}
\item[(i)]  $||f(y)-g_1(y)|| < \frac{\varepsilon}{2^4C_0}$ for every $y\in A$,
\item[(ii)] $||D(y)-G_1'(y)|| < \frac{\varepsilon}{2^4C_0}$ for every $y\in A$,
\item[(iii)] $\Lip(f-g_1) < \min\{\frac{\varepsilon}{2^4C_0(1+2C_0)}, (\frac{\varepsilon}{2^4C_0})^2\}$, and
\item[(iv)] $\Lip(G_1)\le C_0(M+\Lip(f)) + \frac{\varepsilon}{2^4}$.
\end{enumerate}

The mappings $G_n:X\to Z$ for $n\ge 2$ are defined as in the general case.  It can be checked  that the mapping $G:X\to Z$ defined as $G(x):=\sum_{n=1}^\infty G_n(x)$ is $C^1$ smooth, is an extension of $f$ to $X$ and
\begin{equation*}
\Lip(G)\le  C_0(M +\Lip(f)) + \frac{\varepsilon}{2^4}+ \sum_{n=2}^\infty \frac{\varepsilon}{2^{n}}   \le (1+C_0)(M+\Lip(f)).
\end{equation*}
\qed

%%%%%%%%%%
%%%%%%%%%%%%

\medskip

%%%%%%%%%%%%
Let us now give an application  to $C^1$ Banach manifolds.
%%%%%%%%%%%%
\begin{defn}
Let us consider  $C^1$  Banach manifolds  $M$ and $N$  modeled on the Banach spaces $X$ and $Z$ respectively, a subset $A$ of $ M$
and a  continuous map $f:M\to N$.
We say that the mapping $f:A\to N$ satisfies {\em {\bf the mean value condition}} on $A$
if  for every $x\in A$, there are (equivalently, for all) $C^1$ smooth
 charts $\varphi:U\to X$ and $\psi:V\to Z$ with $U$ an open subset of $M$,
  $x\in U$, $V$ an open subset of $N$, $f(U)\subset V$,  such that
 the mapping $\psi\circ f\circ \varphi^{-1}$ satisfies the mean value condition on $\varphi(A)\cap \varphi(U)$.

 \end{defn}

The proof of the following Corollary is similar to the real-valued case \cite{Azafrykeener, MarLuis}. Recall that a paracompact ${C}^1$ manifold $M$ modeled on a Banach space $X$ admits  ${C}^1$ smooth partitions of unity whenever the Banach space $X$  does.

\begin{cor}
Let $M$ be a paracompact $C^1$ Banach manifold modeled  on the Banach space $X$ and $N$ a $C^1$ Banach manifold modeled on the Banach space $Z$. Assume that the pair $(X,Z)$ satisfies property (*).  Let $A$ be a closed subset of $M$ and $f:A\to N$ a mapping. Then,
 $f$ satisfies the  mean value condition on $A$
 if and only if there is a $C^1$ smooth
extension $G:M\to N$ of $f$.
\end{cor}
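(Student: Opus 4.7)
The direction $(\Leftarrow)$ is immediate from the definition: if $G:M\to N$ is a $C^1$ smooth extension of $f$, then for any $x\in A$, choosing $C^1$ smooth charts $\varphi:U\to X$ about $x$ and $\psi:V\to Z$ about $G(x)$ with $G(U)\subset V$, the composition $\psi\circ G\circ\varphi^{-1}$ is $C^1$ smooth on $\varphi(U)\subset X$, so its restriction to $\varphi(A\cap U)$ satisfies the mean value condition for $(\psi\circ G\circ\varphi^{-1})'$.

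For $(\Rightarrow)$, the plan is to reduce the global manifold problem to the Banach space situation handled by Theorem~\ref{theorem:extension}, then glue using a $C^1$ smooth partition of unity on $M$ (which exists since $M$ is paracompact and $X$, having property~(*), admits $C^1$ smooth partitions of unity). First, using the mean value condition (which forces $f$ to be locally Lipschitz near each point of $A$) together with the continuity of charts of $N$, one constructs a locally finite open cover $\{U_\alpha\}$ of $M$ by domains of $C^1$ smooth charts $\varphi_\alpha:U_\alpha\to X$ of $M$ such that whenever $U_\alpha\cap A\neq\emptyset$ there exists a $C^1$ smooth chart $\psi_\alpha:V_\alpha\to Z$ of $N$ with $f(U_\alpha\cap A)\subset V_\alpha$. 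The cover is refined finely enough that, for every $x\in M$, all the local extensions about to be constructed at indices $\alpha$ active at $x$ take values in a single common chart of $N$. In each relevant $\alpha$, apply Theorem~\ref{theorem:extension} to $\psi_\alpha\circ f\circ\varphi_\alpha^{-1}$, which is defined on the relatively closed subset $\varphi_\alpha(A\cap U_\alpha)$ of $\varphi_\alpha(U_\alpha)\subset X$ and inherits the mean value condition there; combined with a smooth cutoff supported in $\varphi_\alpha(U_\alpha)$, this yields after pullback a local $C^1$ smooth extension $G_\alpha:U_\alpha\to V_\alpha$ of $f$.

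Finally, the $G_\alpha$'s are patched together using a $C^1$ smooth partition of unity $\{\rho_\alpha\}$ subordinate to $\{U_\alpha\}$, the convex combination being carried out inside the common chart of $N$ provided by the refinement: one sets
\[
G(x)\;:=\;\psi^{-1}\Bigl(\sum_\alpha \rho_\alpha(x)\,\psi\bigl(G_\alpha(x)\bigr)\Bigr),
\]
where $\psi$ is any chart of $N$ whose domain contains all the active $G_\alpha(x)$. Outside a neighborhood of $A$, the extension is completed by an additional $C^1$ smooth partition-of-unity gluing with any fixed $C^1$ smooth reference map into $N$. The principal obstacle, and the step that requires most care, is the combinatorial refinement ensuring that at every $x\in M$ the finitely many relevant $G_\alpha(x)$ lie in a common chart of $N$; this can be arranged using continuity of $f$ and shrinking of chart domains exactly as in the real-valued analogues in \cite{Azafrykeener,MarLuis}, with Theorem~\ref{theorem:extension} supplying the vector-valued extension needed locally in each chart.
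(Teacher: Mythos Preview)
The paper offers no proof beyond the sentence ``similar to the real-valued case \cite{Azafrykeener,MarLuis}'', and your outline is exactly the natural reading of that hint: pass to charts, invoke Theorem~\ref{theorem:extension} locally, and glue via a $C^1$ partition of unity on $M$. So at the level of strategy you are aligned with what the paper intends.

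There is, however, a genuine gap in your gluing step that has no counterpart in the real-valued (or Banach-space-valued) target case. Your formula
\[
G(x)=\psi^{-1}\Bigl(\sum_\alpha\rho_\alpha(x)\,\psi\bigl(G_\alpha(x)\bigr)\Bigr),
\]
with $\psi$ ``any chart of $N$ whose domain contains all the active $G_\alpha(x)$'', is not well defined: affine combinations in $Z$ are not preserved by the transition maps of $N$, so two admissible charts $\psi$ yield different points of $N$, and there is no reason the resulting $G$ is even continuous, let alone $C^1$. You flag the problem of getting all $G_\alpha(x)$ into a common chart, but not the dependence of the average on that chart. This is not cosmetic. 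Under your reading of the hypotheses (with $f$ given only on $A$) the statement is actually false: take $M=\mathbb{R}^2$, $N=S^1$, $A$ the unit circle and $f:A\to S^1$ the tautological identification; then $(X,Z)=(\mathbb{R}^2,\mathbb{R})$ has property~(*), $f$ is the restriction of the $C^\infty$ map $x\mapsto x/|x|$ and hence satisfies the mean value condition, yet no continuous extension $\mathbb{R}^2\to S^1$ exists by the no-retraction theorem. Note that the paper's definition immediately preceding the corollary takes $f$ to be a continuous map $M\to N$ already defined on all of $M$; with that global continuous datum the topological obstruction vanishes and the gluing can be repaired (arrange each $G_\alpha$ uniformly close to $f$ and perform the convex combination in a chart of $N$ selected consistently by $f(x)$ via a fixed atlas and subordinate partition of unity on $N$), but this has to be carried out explicitly and is precisely the new ingredient over the real-valued case that your sketch does not supply.
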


\section{On the properties (*), (A) and (E)}

%%%%%%%%%%%%
%%%%%%%%

In this section, we present examples of pairs of Banach spaces $(X,Z)$ satisfying property (*). The first examples are pairs of Banach spaces satisfying  properties (A) and (E) and thus  property (*).

%%%%%%
%%%%%%
\begin{ex}
Let $X$ and $Z$ be  Banach spaces such that $X$ is finite dimensional.  Then, the pair  $(X,Z)$ satisfies properties (A) and (E).
\noindent {\em On the one hand, W.B. Johnson, J. Lindenstrauss and G. Schechtman   have shown in  \cite{JohLindSche}  that every pair of Banach space $(X,Z)$ with $X$ $n$-dimensional satisfies property (E) with  constant $K(n)\ge 1$ (which depends only on the dimension of $X$). On the other hand, the classical convolution techniques for smooth approximation in finite dimensional spaces provide property (A) for $(X,Z)$.}
\end{ex}

%%%%%%%
\begin{ex}
Let $X$ and $Z$ be  Hilbert spaces with $X$ separable. Then $(X,Z)$ satisfies the properties  (A) and (E).
{\em M.D. Kirszbraun has shown in \cite{Kirs} (see \cite[Theorem 1.12]{BenyLind}) that the pair $(X,Z)$ satisfies property (E) with $K=1$, whenever $X$ and $Z$ are Hilbert spaces.  Also, R. Fry has proven in \cite{Fry}  (see also  \cite[Theorem H]{HJ}) that $(X,Z)$  satisfies property (A) when $X$ is a separable Hilbert space. }
\end{ex}

%%%%%%%%%%
%%%%%%

\begin{ex}
The pairs $(L_2,L_p)$ for  $1<p<2$ and   $(L_p,L_2)$ for $2<p<\infty$ satisfy properties
(A) and (E). {\em  K. Ball showed  that for every $1<p<2$ the pair  $(L_2,L_p)$ satisfies property (E) with
constant $K(p)\ge 1$ depending only on $p$ \cite{Ball}.  I. G. Tsar'kov  proved that for every $2<p<\infty$ the pair
$(L_p,L_2)$ satisfies property (E) with   constant $K(p)\ge 1$ depending only on $p$  \cite{Tsar}.
Also,  the results in  \cite[Theorem 1]{Fry} yield the fact that $(X,Z)$ satisfies property (A).}
\end{ex}

%%%%%%%%

Recall that a subset $A$ of a metric space $Z$ is called a \emph{Lipschitz retract} of $Z$ if there is a \emph{Lipschitz retraction} from $Z$ to $A$, i.e. there is a Lipschitz map $r:Z\to A$ such that $r_{\mid_A}=id_A$. A metric space $Z$ is called an \emph{absolute Lipschitz retract} if it is a Lipschitz retract of any metric space $W$ containing $Z$. The spaces $(c_0(\Natural),||\cdot||_\infty)$, $(\ell_\infty(\Natural), ||\cdot||_\infty)$ and  $(C(K),||\cdot||_\infty)$  for every compact metric space $K$ are  absolute Lipschitz retracts (see \cite{BenyLind} for more information and examples of absolute Lipschitz retracts). An  absolute Lipschitz retract space satisfies the following Lipschitz extension property.

\begin{prop}\label{proposition:retract}\cite[Proposition 1.2]{BenyLind}
Let $Z$ be a metric space. The following are equivalent:
\begin{enumerate}
\item[(i)] $Z$ is an absolute Lipschitz retract.
\item[(ii)] There is $K\ge 1$, which only  depends on $Z$, such that for every metric space $X$, every subset $A\subset X$ and every Lipschitz mapping $f:A\to Z$, there is a Lipschitz extension $F:X\to Z$ of $f$ such that $\Lip(F)\le K \Lip(f)$.
\end{enumerate}
\end{prop}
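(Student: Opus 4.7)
I would prove the two implications separately. For (ii)$\Rightarrow$(i), given any metric space $W$ containing $Z$, apply (ii) with $X:=W$, $A:=Z$, and $f:=\operatorname{id}_Z$, which is $1$-Lipschitz. The resulting Lipschitz extension $F:W\to Z$ is by construction a Lipschitz retraction of $W$ onto $Z$ with $\Lip(F)\le K$, and $K$ depends only on $Z$, so $Z$ is an absolute Lipschitz retract.

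For the substantial direction (i)$\Rightarrow$(ii), the plan is a gluing argument. Given a metric space $X$, a subset $A\subset X$, and a Lipschitz map $f:A\to Z$, we may assume $L:=\Lip(f)>0$, and after replacing $d_Z$ by $d_Z/L$ we may further assume $f$ is $1$-Lipschitz. On the disjoint union $E:=X\sqcup Z$, I would define a pseudometric $d$ by the path-length formula
\begin{equation*}
d(p,q):=\inf\sum_{i=0}^{n-1}\ell(p_i,p_{i+1}),
\end{equation*}
where the infimum ranges over all finite chains $p=p_0,\dots,p_n=q$ in $E$, with $\ell(u,v):=d_X(u,v)$ if $u,v\in X$, $\ell(u,v):=d_Z(u,v)$ if $u,v\in Z$, $\ell(a,f(a))=\ell(f(a),a):=0$ for $a\in A$, and $\ell(u,v):=+\infty$ otherwise. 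Let $W:=E/{\sim}$ be the associated metric quotient (collapsing points at pseudodistance zero), with quotient map $\pi:E\to W$.

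Next I would verify three facts: (a) $\pi(a)=\pi(f(a))$ for every $a\in A$, which is immediate from $\ell(a,f(a))=0$; (b) $\pi|_Z:(Z,d_Z)\to W$ is an isometric embedding; (c) $\pi|_X:(X,d_X)\to W$ is $1$-Lipschitz, which is immediate from the direct $X$-chain. Granted these, identify $Z$ with $\pi(Z)\subset W$ and apply hypothesis (i) to obtain a Lipschitz retraction $r:W\to Z$ with $\Lip(r)\le K$ depending only on $Z$. Define $F:=r\circ\pi|_X:X\to Z$. Then $\Lip(F)\le K$, and (a) gives $F(a)=r(\pi(f(a)))=f(a)$ for every $a\in A$. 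Undoing the initial rescaling of $d_Z$ recovers $\Lip(F)\le K\Lip(f)$ in the original metric on $Z$.

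The delicate step is (b). Any chain from $z\in Z$ to $z'\in Z$ that traverses $X$ decomposes, after combining consecutive same-space segments and using the free transitions $a\leftrightarrow f(a)$, into an expression of the form
\begin{equation*}
d_Z(z,f(a_1))+d_X(a_1,a_2)+d_Z(f(a_2),f(a_3))+\cdots+d_Z(f(a_{2k}),z'),
\end{equation*}
with $a_1,\dots,a_{2k}\in A$. The $1$-Lipschitz bound $d_Z(f(a_j),f(a_{j+1}))\le d_X(a_j,a_{j+1})$ combined with the triangle inequality in $(Z,d_Z)$ then yields a lower bound of $d_Z(z,z')$, which gives (b) and completes the argument. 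The rescaling to make $f$ $1$-Lipschitz is used precisely here.
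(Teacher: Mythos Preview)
The paper does not supply a proof of this proposition; it simply cites \cite[Proposition 1.2]{BenyLind}. Your gluing (metric-pushout) construction is a perfectly standard route to the extension property, and your verification of the isometric embedding $\pi|_Z$ is correct.

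There is, however, a genuine gap in the direction (i)$\Rightarrow$(ii). The paper's definition of \emph{absolute Lipschitz retract} only asserts that $Z$ is a Lipschitz retract of every metric superspace $W$; it does \emph{not} postulate a uniform retraction constant. In your argument, the space $W$ you build depends on $X$, $A$, and $f$, so when you invoke (i) to obtain a retraction $r:W\to Z$, the bound $\Lip(r)$ may a priori depend on all of these data. Your sentence ``with $\Lip(r)\le K$ depending only on $Z$'' is therefore unjustified as written, and without it you do not get a constant $K=K(Z)$ in (ii).

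The standard fix --- and this is essentially how Benyamini--Lindenstrauss proceed --- is to first establish the uniform retraction constant once and for all via an $\ell_\infty$ embedding. Embed $Z$ isometrically into $\ell_\infty(Z)$ (Kuratowski); by (i) there is a Lipschitz retraction $r_0:\ell_\infty(Z)\to Z$ with some fixed constant $K$. Now for any $W\supset Z$, the inclusion $Z\hookrightarrow\ell_\infty(Z)$ extends coordinatewise (McShane) to a $1$-Lipschitz map $W\to\ell_\infty(Z)$, and composing with $r_0$ gives a $K$-Lipschitz retraction $W\to Z$. With this in hand your gluing argument goes through; alternatively, one can bypass the gluing entirely by applying the same $\ell_\infty$ trick directly to $f:A\to Z$ (extend $\iota\circ f$ coordinatewise to $X$, then compose with $r_0$), which is the usual textbook proof.
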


We  obtain the following proposition (see \cite{HJ}).
\begin{prop}\label{prop:approx}
Let $X$  be a Banach space such that there are a set $\Gamma\neq \emptyset$  and a bi-Lipschitz homeomorphism $\varphi:X\to c_0(\Gamma)$ with $C^1$ smooth  coordinate functions $e^*_\gamma\circ \varphi:X\to \Real$. Let  $Z$ be a Banach space which is an absolute Lipschitz retract. Then the pair $(X,Z)$ satisfies  properties (A) and (E).
\end{prop}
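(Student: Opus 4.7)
The two properties are proved separately.

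Property (E) is immediate from Proposition~\ref{proposition:retract}: since $Z$ is an absolute Lipschitz retract, there is $K\ge 1$ depending only on $Z$ such that every Lipschitz mapping from a subset of any metric space into $Z$ extends to the whole space with Lipschitz constant multiplied by at most $K$; taking the ambient space to be $X$ gives (E) with the same $K$.

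For property (A), my plan is to construct $C^1$ smooth Lipschitz approximations via a suitable partition of unity on $X$. The hypotheses on $X$ will yield, for every prescribed $\delta>0$, a $C^1$ smooth partition of unity $\{\psi_j\}_{j\in J}$ on $X$ with: (i) $\{\supp \psi_j\}_{j\in J}$ pointwise locally finite and refining the cover of $X$ by open balls of radius $\delta$; and (ii) $\sum_{j\in J}\|\psi_j'(x)\|\le K(X)/\delta$ for every $x\in X$, where $K(X)$ depends only on the bi-Lipschitz distortion of $\varphi$. This is the standard Fry--H\'ajek--Johanis ``sup-bump'' construction used in \cite{Fry,HJ}: composing $C^\infty$ one-dimensional bumps with the smooth coordinate functions $e_\gamma^*\circ\varphi$ and exploiting the $\sup$-norm structure inherited from $\varphi(X)\subset c_0(\Gamma)$ produces bumps whose supports have controlled diameter and whose derivatives have a uniformly bounded $\ell^1$-sum.

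Once the partition is in hand, I would finish as follows. Given Lipschitz $f:X\to Z$ with $L:=\Lip(f)>0$ and $\varepsilon>0$, set $\delta:=\varepsilon/L$, take the partition above, pick $x_j\in\supp \psi_j$, and define $g(x):=\sum_{j\in J}\psi_j(x)f(x_j)$. Local finiteness in (i) makes $g$ of class $C^1$. Since $\|f(x)-f(x_j)\|<L\delta$ whenever $\psi_j(x)>0$, the approximation estimate $\|g(x)-f(x)\|\le\sum_j\psi_j(x)\|f(x_j)-f(x)\|<\varepsilon$ is immediate. For the Lipschitz control, the identity $\sum_j\psi_j'(x)=0$ lets me rewrite $g'(x)=\sum_j\psi_j'(x)(f(x_j)-f(x))$, so $\|g'(x)\|\le L\delta\sum_j\|\psi_j'(x)\|\le K(X)\,L$ by (ii). This gives (A) with constant $K(X)$ depending only on $X$. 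The main obstacle is the uniform sum bound (ii): in a general locally finite partition of unity each $\psi_j$ has $\Lip(\psi_j)$ controlled individually, but the pointwise $\ell^1$-sum scales with the local multiplicity of the cover, which is uncontrolled in infinite-dimensional Banach spaces. It is exactly the sup-norm structure inherited from $c_0(\Gamma)$ via $\varphi$ that makes the dimension-free bound (ii) available, and this is why the $c_0(\Gamma)$-type hypothesis on $X$ enters the proof.
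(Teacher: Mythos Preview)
Your argument for (E) matches the paper's (it is immediate from Proposition~\ref{proposition:retract}). For (A), however, your route is genuinely different from the paper's. The paper does not build a partition of unity on $X$; instead it transfers $f$ to $\varphi(X)\subset c_0(\Gamma)$ via $\varphi^{-1}$, uses the absolute Lipschitz retract hypothesis on $Z$ to extend $f\circ\varphi^{-1}$ to a Lipschitz map $\widetilde f$ on all of $c_0(\Gamma)$, then invokes the black-box approximation result of \cite{HJc0} to get a $C^\infty$ Lipschitz $h:c_0(\Gamma)\to Z$ (locally depending on finitely many coordinates) with $\|h-\widetilde f\|<\varepsilon$ and $\Lip(h)\le\Lip(\widetilde f)$, and finally sets $g=h\circ\varphi$; the $C^1$ smoothness of $g$ uses that $h$ locally factors through finitely many of the $C^1$ coordinate functions $e_\gamma^*\circ\varphi$. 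Your approach instead builds the sup-partition directly on $X$ (equivalently, pulls back the $c_0(\Gamma)$ sup-partition, which is legitimate for the same ``finitely many coordinates'' reason) and takes the elementary locally constant approximant $g=\sum_j\psi_j(\cdot)f(x_j)$. Both arguments are correct. Your version is more hands-on and has the minor bonus that it never uses the absolute Lipschitz retract hypothesis on $Z$ in the proof of (A)---only in (E)---so it actually shows $(X,Z)$ has property (A) for \emph{every} Banach space $Z$; the paper's version is shorter because the hard work is outsourced to \cite{HJc0}, at the cost of needing the ALR extension step to get a map defined on all of $c_0(\Gamma)$.
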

\begin{proof}
Let us take the mapping $f\circ \varphi^{-1}:\varphi(X)\to Z$ which is $\Lip(\varphi^{-1})\Lip(f)$-Lipschitz. By Proposition \ref{proposition:retract}, there is a Lipschitz extension $\widetilde{f}:c_0(\Gamma)\to Z$ of $f\circ \varphi^{-1}$ with $\Lip(\widetilde{f})\le K \Lip(\varphi^{-1})\Lip(f)$ and $K$ is the constant given in Proposition \ref{proposition:retract}. Now, from the results in  \cite{HJc0} we can find a  $C^\infty$ smooth and Lipschitz mapping $h:c_0(\Gamma)\to Z$  which locally depends on finitely many coordinate functionals $\{e_\gamma^*\}_{\gamma \in \Gamma}$, such that $||\widetilde{f}(x)-h(x)||<\varepsilon$ and $\Lip(h)=\Lip(\widetilde{f})$.
Let us define  $g:X\to Z$ as $g(x):=h(\varphi(x))$ for every  $x\in X$. The mapping $g$
 is $C^1$ smooth, $||f(x)-g(x)||<\varepsilon $ for all $x\in X$ and $\Lip(g)\le C\Lip(f)$, with $C:= K \Lip(\varphi) \Lip(\varphi^{-1})$.
 \end{proof}

This provides the following example.
%%%%%%%
%%%%%%%%

\begin{ex}
Let $X$ and $Z$ be Banach spaces such that $X^*$ is separable and $Z$ is an absolute Lipschitz retract. Then,  the pair $(X,Z)$  satisfies  properties (A) and (E).  {\em Notice that P.  H\'ajek and M. Johanis \cite{HJ} proved the existence of a bi-Lipschitz homeomorphism with $C^k$ smooth coordinate functions  in every separable Banach space with a $C^k$ smooth and Lipschitz bump function. }
\end{ex}

%%%%
%%%%%

Now, with these examples, Theorem \ref{theorem:extension} and Theorem \ref{theorem:Lipschitz:extension}, we obtain
the following consequence.
\begin{cor} \label{extension:c_0}
Let $X$ and $Z$ be Banach spaces and assume that at least one of the following conditions holds:
\begin{itemize}
\item[(i)] $X$ is finite dimensional,
\item[(ii)] $X$ and $Z$ are Hilbert spaces and $X$ is separable,
\item[(iii)] $X=L_2$ and $Z=L_p$ with $1<p<2$,
\item[(iv)] $X=L_p$ and $Z=L_2$ with $2<p<\infty$,
\item[(v)]  there are a set $\Gamma\neq \emptyset$  and a bi-Lipschitz homeomorphism $\varphi:X\to c_0(\Gamma)$ with $C^1$ smooth coordinate functions (for example, when  $X^*$ is separable), and $Z$ is an absolute Lipschitz retract.
\end{itemize}
Let $A$ be a closed subset of $X$ and $f:A\to Z$ a mapping.
Then,
 $f$ satisfies the mean value condition (mean value condition for a bounded map and
  $f$ is Lipschitz) on $A$
 if and only if there is a $C^1$ smooth ($C^1$ smooth and Lipschitz, respectively)
extension $G$ of $f$ to  $X$.

Moreover, if $f$ is Lipschitz and  satisfies the mean value condition for a bounded map  $D:A\to \LsXZ$ with
$M:=\sup\{||D(y)||:y\in A\}<\infty$,
then we can obtain a $C^1$ smooth and Lipschitz extension $G$ with $\Lip(G)\le (1+C_0)(M+\Lip(f))$, where $C_0\ge 1$  is the constant given by property (*) (which depends only on $X$ and $Z$).
\end{cor}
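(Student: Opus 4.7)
The plan is to reduce the statement to an application of Theorems \ref{theorem:extension} and \ref{theorem:Lipschitz:extension}. Since both theorems are established for pairs $(X,Z)$ enjoying property (*), it suffices to verify that under any of the hypotheses (i)--(v) the pair $(X,Z)$ has property (*). Once property (*) is established, the equivalence between existence of a $C^1$ smooth (resp.\ $C^1$ smooth and Lipschitz) extension and the mean value condition (resp.\ mean value condition for a bounded map together with Lipschitzness) follows immediately from those two theorems, and the estimate $\Lip(G)\le (1+C_0)(M+\Lip(f))$ in the ``moreover'' part is exactly the quantitative bound appearing in the statement of Theorem \ref{theorem:Lipschitz:extension}.

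To verify property (*) in each of the five cases, I would invoke the examples of Section 3 together with Remark \ref{remark:properties}(2). More precisely, in case (i) the first example of Section 3 (via the results of Johnson--Lindenstrauss--Schechtman for Lipschitz extension and classical convolution for smooth approximation) gives both property (A) and property (E) for $(X,Z)$. Cases (ii), (iii) and (iv) are covered by the examples invoking Kirszbraun's theorem together with Fry's smooth approximation result, and the $L_p$--$L_2$ results of Ball and Tsar'kov together with Fry's approximation theorem; each of them provides simultaneously property (A) and property (E) for the corresponding pair. Finally, in case (v) Proposition \ref{prop:approx} yields properties (A) and (E) directly, using the bi-Lipschitz homeomorphism with $c_0(\Gamma)$, Proposition \ref{proposition:retract}, and the smooth approximation results of H\'ajek--Johanis in $c_0(\Gamma)$.

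Having properties (A) and (E) in every case, Remark \ref{remark:properties}(2) gives property (*) for the pair $(X,Z)$, with constant $C_0$ depending only on $X$ and $Z$. With property (*) in hand, Theorem \ref{theorem:extension} produces the characterization via the mean value condition for the $C^1$ smooth extension part, while Theorem \ref{theorem:Lipschitz:extension} delivers both the characterization via the bounded mean value condition and the Lipschitz estimate $\Lip(G)\le (1+C_0)(M+\Lip(f))$ for the $C^1$ smooth and Lipschitz extension, concluding the proof.

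There is no substantial obstacle here; the statement is essentially a synthesis of the main theorems of Section 2 with the list of concrete examples of Section 3. The only point requiring a bit of care is the quantitative dependence of $C_0$, which must be tracked through Remark \ref{remark:properties}(2) to make sense of the constant in the ``moreover'' clause, but since Remark \ref{remark:properties}(2) gives property (*) with a constant depending only on the constants for (A) and (E) (hence only on $X$ and $Z$), this is automatic.
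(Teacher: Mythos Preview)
Your proposal is correct and matches the paper's own approach: the paper presents this corollary without an explicit proof, introducing it simply as a consequence of ``these examples, Theorem \ref{theorem:extension} and Theorem \ref{theorem:Lipschitz:extension}'', and your argument is precisely the fleshed-out version of that sentence---verify property (*) in each case via the examples of Section~3 (which furnish properties (A) and (E), hence (*) by Remark~\ref{remark:properties}(2)), then invoke the two main theorems.
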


%%%%%%%%%%%%
%%%%%%%%%%%%%

%Let us now turn to study whether properties (A) or (*) are necessary to obtain $C^1$ smooth  extensions.
Let us now prove that  property (*) is  necessary  to obtain $C^1$ smooth and Lipschitz extensions. We also obtain  an example of a pair of Banach spaces satisfying property (A) but  not property (*). Thus, it does not admit $C^1$ smooth and Lipschitz extension.

\begin{prop}
Let  $(X,Z)$ be a pair of Banach spaces such that there is a constant $C\ge 1$, which only depends on $X$ and $Z$,  such that for every closed subset $A\subset X$  and every Lipschitz mapping $f:A\rightarrow Z$  satisfying the mean value condition for a bounded map $D$ with $M=\sup\{||D(y)||:y\in A\}<\infty$, there exists a $C^1$ smooth and Lipschitz extension $G$ of $f$ to $X$ with  $\Lip(G)\le C(M+\Lip(f))$.  Then, the pair $(X,Z)$ satisfies property (*). %Moreover, for every subset $A$ of $X$,  every Lipschitz mapping $f:A\to Z$ and every $\varepsilon>0$,  there exists a $C^1$ smooth and Lipschitz mapping $g:X\to Z$ such that $||f(x)-g(x)||<\varepsilon$ for every $x\in A$ and $\Lip(g)\le C\Lip(f)$.
Therefore, by Theorem \ref{theorem:Lipschitz:extension}, the above assumption  is equivalent to property (*).%when $(X,Z)$  satisfies property (E) (for instance, when $Z=\Real$).
\end{prop}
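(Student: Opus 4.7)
The plan is to deduce property (*) directly from the given extension hypothesis by restricting $f$ to a sufficiently dense discrete skeleton of $A$. Given a Lipschitz mapping $f:A\to Z$ and $\varepsilon>0$, and assuming as we may that $\Lip(f)>0$, I would first set
\[
\delta:=\frac{\varepsilon}{2(1+C)\Lip(f)}
\]
and, by Zorn's lemma, choose a maximal $\delta$-separated subset $D\subset A$; that is, $||x-y||\ge \delta$ for any distinct $x,y\in D$, and $D$ cannot be properly enlarged within $A$ while preserving this property.

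The next step is to verify three short facts about $D$ and $f|_D$. First, $D$ is closed in $X$, since a sequence of distinct points in $D$ cannot be Cauchy. Second, $f|_D:D\to Z$ is Lipschitz with $\Lip(f|_D)\le \Lip(f)$. Third, $f|_D$ satisfies the mean value condition for the zero map $D_0:D\to \LsXZ$, $D_0(y)\equiv 0$: for each $y\in D$ the ball $B(y,\delta/2)$ meets $D$ only at $\{y\}$ (by the $\delta$-separation), so the defining inequality holds vacuously for every prescribed tolerance. In particular $D_0$ is continuous and bounded with $M=0$.

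Applying the hypothesis to $f|_D$ on the closed set $D$ then yields a $C^1$ smooth and Lipschitz extension $G:X\to Z$ of $f|_D$ with
\[
\Lip(G)\le C(M+\Lip(f|_D))\le C\,\Lip(f).
\]
Finally, maximality of $D$ ensures that every $x\in A$ lies within distance $\delta$ of some $y\in D$, for otherwise $D\cup\{x\}$ would still be $\delta$-separated, contradicting maximality. Using $G(y)=f(y)$,
\[
||f(x)-G(x)||\le ||f(x)-f(y)||+||G(y)-G(x)|| < (1+C)\,\Lip(f)\,\delta=\varepsilon/2<\varepsilon.
\]
Hence $G$ is the desired $C^1$ smooth and Lipschitz $\varepsilon$-approximation of $f$ on $A$ with $\Lip(G)\le C\,\Lip(f)$, and property (*) holds with constant $C_0=C$. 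The equivalence with the original hypothesis then follows from Theorem \ref{theorem:Lipschitz:extension}.

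There is no serious obstacle in this argument; the only step that warrants explicit attention is the verification that $f|_D$ satisfies the mean value condition on the discrete set $D$, which ultimately reduces to the observation that a small enough ball around any point of a $\delta$-separated set captures no other point of the set, so the defining inequality becomes trivial for the zero choice of $D_0$.
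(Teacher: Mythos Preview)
Your proof is correct and follows essentially the same approach as the paper: restrict $f$ to a discrete $\delta$-net (where the mean value condition holds trivially for the zero map), apply the extension hypothesis to obtain a $C^1$ Lipschitz extension $G$ with $\Lip(G)\le C\Lip(f)$, and then use the density of the net to bound $\|f-G\|$ on $A$. The only differences are cosmetic---you invoke Zorn's lemma explicitly to build the maximal $\delta$-separated set and choose $\delta$ half as large to secure a strict $\varepsilon/2$ bound, whereas the paper simply asserts the existence of an $\tfrac{\varepsilon}{(C+1)L}$-net and arrives at $\le \varepsilon$.
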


\begin{proof}
Let $A$ be a subset of $X$,  $f:A\to Z$ a $L$-Lipschitz mapping and $\varepsilon>0$. Let us take  a  $\frac{\varepsilon}{(C+1)L}$-net in $A$ which we shall denote by $N$, i.e.  a subset $N$ of $A$ such that  (i)
 $||z-y||\ge \frac{\varepsilon}{(C+1)L}$ for every $z,y\in N$, (ii)   for every $x\in A$ there is a point $y\in N$ such that $||x-y||\le \frac{\varepsilon}{(C+1)L}$. Clearly,  $N$ is a closed subset of $X$ and $f_{\mid_N}:N\to Z$ is a $L$-Lipschitz mapping on $N$  satisfying the mean value condition for the bounded map given by $D(x)=0 \in \LsXZ$ for every $x\in N$. Then, by assumption, there exists a $C^1$ smooth and $CL$-Lipschitz  mapping $G:X\to Z$ such that $G_{\mid_N}=f_{\mid_N}$. For  any $x\in A$, let us choose $y\in N$ such that $||x-y||\le \frac{\varepsilon}{(C+1)L}$. Then, $G(y)=f(y)$ and
\begin{equation*}
||f(x)-G(x)||\le||f(x)-f(y)|| +  ||G(x)-G(y)|| \le (L+CL)||x-y||\le \varepsilon.
\end{equation*}
\end{proof}

%%%%%%%%%%%%
%%%%%%%%%%

\begin{ex}
Although the pair $(L_p,L_2)$ with $1<p<2$  satisfies  property (A) (see \cite{Fry}),
it does not satisfy property (E) \cite{Naor}.  {\em Thus, Remark \ref{remark:properties}\hspace{0.6 mm}(2) implies that the pair  $(L_p,L_2)$ does not satisfy property (*), and the above proposition reveals that there exist Lipschitz mappings $h:A\rightarrow L_2$ defined on closed subsets  $A$ of $L_p$ satisfying  the mean value condition
on $A$ for a bounded map which cannot be extended to  $C^1$ smooth and Lipschitz mappings  on $L_p$. In particular,  property (A) is a necessary condition but it is not a sufficient condition to obtain $C^1$ smooth and Lipschitz extensions, and properties (*) and (A) are not equivalent in general.}

%{\em An examination of the proof of \cite[Theorem 1]{Naor}  reveals that there exist a closed subset $A$ of $L_p$ and a Lipschitz mapping $h:A\rightarrow L_2$  satisfying the mean value condition
%on $A$ for a bounded map and
%$h$ cannot be extended to a Lipschitz mapping  (thus, it cannot be extended to a $C^1$ smooth and Lipschitz mapping) on $L_p$. In particular, the pair $(L_p,L_2)$ does not satisfy property (*).  Also, property (A) is a necessary condition but it is not a sufficient condition to obtain $C^1$ smooth and Lipschitz extensions, and properties (*) and (A) are not equivalent in general.}
\end{ex}

%%%%%%%%%%
%%%%%%%%%%%%%%%%%%%
%%%%%%%%%

\section{Smooth extension from subspaces}

Finally, let us make a brief comment on the extension of $C^1$ smooth mappings defined on a subspace. Let $X$ and $Z$ be Banach spaces and $Y$ a closed subspace of $X$. If every $C^1$ smooth  mapping $f:Y\to Z$ can be extended to a $C^1$ smooth  mapping $F:X\to Z$, then for every bounded and linear operator $T:Y\to Z$ there is a bounded and linear operator $\widetilde{T}:X\to Z$ such that $\widetilde{T}_{\mid_Y}=T$. Moreover,
assume that every  $C^1$ smooth and Lipschitz mapping $f:Y\to Z$ can be extended to a $C^1$ smooth and Lipschitz mapping $F:X\to Z$ with $\Lip(F)\le C \Lip(f)$ with $C$ depending only on $X$ and $Z$. Then, for every bounded and linear  operator $T:Y\to Z$
there is a bounded and linear operator $\widetilde{T}:X\to Z$ such that $\widetilde{T}_{\mid_Y}=T$ and $||\widetilde{T}||_{\LsXZ}\le C ||T||_{\LsYZ}$.
Indeed, it is enough to consider $\widetilde{T}=G'(0)$, where $G$ is the extension mapping of $T$ given by the assumptions.

%%%%%%%%%%%%
%%%%%%%%%%

\begin{defn}
We say that the pair of Banach spaces $(X,Z)$ satisfies the \textbf{linear extension property} if  there is $\lambda\ge 1$, which depends only on $X$ and $Z$, such that  for every closed subspace $Y\subset X$ and every bounded and linear operator $T:Y\to Z$, there is a bounded and linear operator $\widetilde{T}:X\to Z$ such that $\widetilde{T}_{\mid_Y}=T$ and $||\widetilde{T}||_ {\LsXZ}\le \lambda  ||T||_{\LsYZ}$.
\end{defn}

\begin{ex}
\begin{enumerate}
\item[(i)] Maurey's extension theorem \cite{Maurey} asserts that the pair of Banach spaces $(X,Z)$ satisfies the linear extension property whenever $X$ has type $2$ and $Z$ has cotype $2$. Therefore, $(L_2,L_p)$ for $1<p<2$   and  $(L_p,L_2)$ for $2<p<\infty$  satisfy the linear extension property (recall  that  $L_p$ has type $2$  for $2\le p <\infty$ and cotype $2$ for $1<p \le2$, see \cite{AlbiacKalton}).

\item[(ii)] For every compact metric space $K$, every non-empty set $\Gamma$ and
 $1<p<\infty$, the pairs $(c_0(\Gamma),C(K))$ and  $(\ell_p(  \Natural), C(K))$
 satisfy the  linear extension property (\cite[Theorem 3.1]{LindPel} and \cite{JohnZippin}).
\item[(iii)] For every compact metric space $K$, the pair $(X,C(K))$ satisfies the linear extension property whenever $X$ is an Orlicz space with a separable dual \cite{Kalton}.
\item[(iv)] The pair $(X,c_0(\Natural))$ satisfies the linear extension property whenever $X$ is a separable Banach space \cite{Sobczyk}.
 \end{enumerate}
\end{ex}

We shall prove the following useful proposition.
\begin{prop}\label{meanvalue:of:smooth}
Let  $(X, Z)$ be a pair of Banach spaces satisfying the linear extension property and $Y$ a closed subspace of $X$. If $f:Y\to Z$ is a $C^1$ smooth mapping  ($C^1$ smooth and Lipschitz mapping), then $f$ satisfies the mean value condition (mean value condition for a bounded map, respectively) on $Y$.
\end{prop}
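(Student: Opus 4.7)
The plan is to reduce the statement to constructing a continuous map $D:Y\to\LsXZ$ satisfying $D(y)_{|_Y}=f'(y)$ for every $y\in Y$, and then to obtain such a $D$ from the Bartle--Graves selection theorem applied to the restriction map. For the reduction: since $Y$ is a linear subspace, the segment $[w,z]$ lies in $Y$ for $z,w\in Y$, so the vector-valued mean value inequality gives $f(z)-f(w)=\int_0^1 f'(w+t(z-w))(z-w)\,dt$, and hence
\begin{equation*}
\|f(z)-f(w)-D(y)(z-w)\|\le \sup_{u\in[w,z]}\|f'(u)-f'(y)\|\cdot\|z-w\|.
\end{equation*}
By continuity of $f':Y\to\mathcal{L}(Y,Z)$, this quantity is bounded by $\varepsilon\|z-w\|$ provided $r$ is small enough, which witnesses the mean value condition at $y$ for the map $D$.

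To construct such a $D$, I would consider the restriction operator $R:\LsXZ\to\mathcal{L}(Y,Z)$, $R(T):=T_{|_Y}$, which is bounded linear with $\|R\|\le 1$. The linear extension property asserts precisely that $R$ is surjective, since every $T\in\mathcal{L}(Y,Z)$ admits an extension $\widetilde{T}\in\LsXZ$ with $R(\widetilde{T})=T$ and $\|\widetilde{T}\|\le \lambda\|T\|$. By the Bartle--Graves selection theorem, every bounded linear surjection between Banach spaces admits a continuous, positively homogeneous section with linear norm control; so there exists a continuous map $\sigma:\mathcal{L}(Y,Z)\to\LsXZ$ with $R\circ\sigma=\operatorname{id}$ and $\|\sigma(T)\|\le C\|T\|$ for some constant $C>0$ depending only on $R$ (hence only on $X$, $Y$ and $Z$).

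Setting $D:=\sigma\circ f'$ completes the proof. Continuity of $D$ is immediate from the continuity of $\sigma$ and of $f'$; the section identity gives $D(y)_{|_Y}=f'(y)$ for every $y\in Y$, which is what is needed for the mean value condition as explained above; and when $f$ is moreover Lipschitz on $Y$ we have $\|f'(y)\|\le\Lip(f)$ for all $y\in Y$, hence $\|D(y)\|\le C\Lip(f)$, producing a bounded $D$ as required by the second part of the statement. The one genuinely non-trivial ingredient is the appeal to Bartle--Graves; a more pedestrian alternative would be to verify lower semicontinuity of the set-valued map $y\mapsto\{T\in\LsXZ:T_{|_Y}=f'(y)\}$ and apply Michael's selection theorem, but that route requires an extra argument to secure the norm bound in the Lipschitz case, which Bartle--Graves delivers for free.
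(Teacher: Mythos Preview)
Your proof is correct and follows essentially the same approach as the paper: both arguments observe that the restriction map $R:\LsXZ\to\mathcal{L}(Y,Z)$ is a bounded linear surjection by the linear extension property, invoke the Bartle--Graves theorem to obtain a continuous section $\sigma$ (the paper calls it $B$), and then set $D:=\sigma\circ f'$. Your write-up is in fact more explicit than the paper's, since you spell out the mean value inequality verifying that such a $D$ witnesses the mean value condition, whereas the paper simply asserts this last step.
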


\begin{proof}
First, let us give the following  lemma.
\begin{lem} %(\cite[Lemma 2]{Azafrykeener})
Let  $(X, Z)$ be a pair of Banach spaces satisfying the \emph{linear extension property} and $Y$ a closed subspace of $X$. Then there is a constant $\eta\ge 1$ and  there is a continuous map $B:\mathcal{L}(Y,Z)\to \mathcal{L}(X,Z)$ such that $B(f)_{\mid_{Y}}=f$  and  $||B(f)||_{\mathcal{L}(X,Z)}\le \eta ||f||_{\mathcal{L}(Y,Z)}$ for every $f\in \mathcal{L}(Y,Z)$.
\end{lem}
The proof of this lemma follows the lines of the real-valued case \cite[Lemma 2]{Azafrykeener}. Indeed, let us  take $W=\mathcal{L}(X,Z)$, $V= \mathcal{L}(Y,Z)$
 and $T:W\to V$ the bounded and linear map given by the restriction to $Y$, $T(f)=f_{\mid_{Y}}$. By assumption, the map $T$   is onto. Thus,  we  apply the Bartle-Graves's theorem (see \cite[Lemma VII 3.2]{DGZ}) in order to find the map $B$.

Now, if $f:Y\to Z$ is a $C^1$ smooth mapping ($C^1$ smooth and Lipschitz mapping), we consider the mapping $D:Y\to  \mathcal{L}(X,Z)$ defined as $D(y):=B(f'(y))$ for every $y\in Y$. Then,  $f$ satisfies the mean value condition for $D$ (the mean value condition for the bounded map $D$, respectively).
\end{proof}

Now, we can apply Theorems \ref{theorem:extension} and  \ref{theorem:Lipschitz:extension} to obtain the following result on $C^1$ smooth extensions and $C^1$ smooth and Lipschitz extensions to $X$ of $C^1$ smooth mappings  defined on $Y$ whenever  $(X,Z)$ satisfies  property  (*) and the linear extension property.

\begin{cor}
Let $(X,Z)$ be any of the following  pairs of Banach spaces:
\begin{enumerate}
\item[(i)] $(L_p,L_2)$,   $2<p<\infty$,
\item[(ii)]  $(c_0(\Gamma), C(K))$,  $\Gamma$ is  a non-empty set and $K$  is a compact metric  space,
\item[(iii)] $(\ell_p(\Natural),C(K))$,  $1<p<\infty$ and $K$ is a compact metric space,
\item[(iv)] $(X,C(K))$,   $X$ is an Orlicz space with separable dual and $K$ is a compact metric space,
\item[(v)] $(X,c_0(\Natural))$, $X$ with separable dual,
\item[(vi)] $(X,\Real)$, such that there is a set $\Gamma\neq \emptyset$  and there is a bi-Lipschitz homeomorphism $\varphi:X\to c_0(\Gamma)$ with $C^1$ smooth coordinate functions (for instance, when  $X^*$
is separable).
\end{enumerate}
 Let $Y$ be a closed subspace of  $X$.  Then, every $C^1$ smooth  mapping $f:Y\rightarrow Z$  has a $C^1$ smooth extension to $X$.

Moreover, there is $C\ge 1$, which depends only on $X$ and $Z$, such that every Lipschitz and $C^1$ smooth  mapping  $f:Y \to Z$ has a Lipschitz and $C^1$ smooth  extension $F:X\to Z$ to  $X$ with $\Lip(F)\le C\Lip(f)$.
\end{cor}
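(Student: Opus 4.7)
The strategy is to reduce the statement to a direct application of Theorems \ref{theorem:extension} and \ref{theorem:Lipschitz:extension} via Proposition \ref{meanvalue:of:smooth}. The key observation is that these three results combine to prove the corollary for any pair $(X,Z)$ that simultaneously satisfies property (*) and the linear extension property. Thus the plan has two phases: first, for each of the six listed pairs, verify that both ingredients are available; second, pipe a $C^1$ smooth mapping $f:Y\to Z$ through the machinery already developed in Sections 2 and 3.

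First I would verify both properties for each case. For property (*): pair (i) is covered by the example on $(L_p,L_2)$ above (it satisfies (A) and (E), hence (*)); pairs (ii)--(v) are all of the form \emph{(space with a bi-Lipschitz $C^1$-smooth coordinate homeomorphism into $c_0(\Gamma)$)} paired with an \emph{absolute Lipschitz retract}, so Proposition \ref{prop:approx} gives property (*) (note that $c_0(\Gamma)$ is itself trivially bi-Lipschitz to $c_0(\Gamma)$, and for $\ell_p$ or Orlicz spaces with separable dual, the H\'ajek--Johanis construction cited after the previous example applies; $C(K)$ and $c_0(\mathbb N)$ are absolute Lipschitz retracts); pair (vi) is property (*) for $X$ itself, which is exactly Proposition \ref{prop:approx} applied with $Z=\mathbb R$. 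For the linear extension property: (i) is Maurey's theorem, (ii) is Lindenstrauss--Pelczy\'nski, (iii) is Johnson--Zippin, (iv) is Kalton's theorem, (v) is Sobczyk's theorem, all recorded in the example above; for (vi) it is just the Hahn--Banach theorem.

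Having certified both properties, I would proceed as follows. Given a $C^1$ smooth mapping $f:Y\to Z$, Proposition \ref{meanvalue:of:smooth} produces a continuous map $D:Y\to \mathcal{L}(X,Z)$, namely $D(y)=B(f'(y))$ with $B$ the Bartle--Graves right inverse of the restriction morphism, such that $f$ satisfies the mean value condition on the closed subset $Y\subset X$ for $D$. Since $(X,Z)$ has property (*), Theorem \ref{theorem:extension} yields a $C^1$ smooth extension $F:X\to Z$. For the Lipschitz statement, note that if $f$ is moreover Lipschitz then $\|f'(y)\|\le \Lip(f)$ for every $y\in Y$, so the same map $D$ is bounded with $\sup_{y\in Y}\|D(y)\|\le \eta\Lip(f)$, where $\eta$ is the constant from the lemma inside Proposition \ref{meanvalue:of:smooth}. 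Theorem \ref{theorem:Lipschitz:extension} then delivers a $C^1$ smooth and Lipschitz extension $F:X\to Z$ with
\begin{equation*}
\Lip(F)\le (1+C_0)\bigl(\eta\Lip(f)+\Lip(f)\bigr)=(1+C_0)(1+\eta)\Lip(f),
\end{equation*}
so the constant $C:=(1+C_0)(1+\eta)$ depends only on $X$ and $Z$, as required.

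The only step that requires real thought is the first one---matching each of the six pairs with the correct citations from Section 3 and from the examples following the definition of the linear extension property---since all the analytic content has already been absorbed into Theorems \ref{theorem:extension} and \ref{theorem:Lipschitz:extension} and into Proposition \ref{meanvalue:of:smooth}. No new smoothing, gluing, or partition-of-unity argument is needed.
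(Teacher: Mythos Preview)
Your proposal is correct and follows essentially the same approach as the paper: verify that each of the six pairs satisfies both property (*) (via the examples and Proposition \ref{prop:approx} in Section 3) and the linear extension property (via the examples following Definition 4.1), then invoke Proposition \ref{meanvalue:of:smooth} together with Theorems \ref{theorem:extension} and \ref{theorem:Lipschitz:extension}. The paper itself merely states this outline in the sentence preceding the corollary; you have spelled it out in more detail and even made the constant $C=(1+C_0)(1+\eta)$ explicit, which the paper does not do.
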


%%%%%
%%%%%

Let us now consider the following definition.
\begin{defn}
Let $X,Z$ be Banach spaces and $Y$ a closed subspace of $X$. We say that the pair $(Y,Z)$ has the \textbf{linear $X$-extension property} if there is $\lambda\ge 1$, which depends on $X$, $Y$ and $Z$, such that for every  bounded and linear map $T: Y\to Z$ there is a bounded and linear extension $\widetilde{T}:X\to Z$ with $||\widetilde{T}||_{\LsXZ}\le \lambda ||T||_{\LsYZ}$.
\end{defn}

%Some examples of pair of Banach spaces satisfying linear $X$-extension property can be found in Ê\cite{Kalton}.
By Theorem \ref{theorem:extension}, Theorem \ref{theorem:Lipschitz:extension} and a slight modification of Proposition \ref{meanvalue:of:smooth}, we obtain the following corollary.

\begin{cor}
Let $(X, Z)$ be a pair of Banach spaces with  property (*). Let $Y$ be a closed subspace of $X$ such that the pair $(Y,Z)$ has the  linear $X$-extension property. Then, every $C^1$ smooth  mapping  $f:Y\rightarrow Z$ has a $C^1$ smooth  extension to $X$.

Moreover, there is $C\ge 1$, which depends on $X$, $Y$ and $Z$, such that every Lipschitz and $C^1$ smooth  mapping  $f:Y \to Z$ has a Lipschitz and $C^1$ smooth  extension $F:X\to Z$ to  $X$ with $\Lip(F)\le C\Lip(f)$.
\end{cor}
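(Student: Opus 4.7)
The plan is to reduce this corollary to the already-proved Theorems \ref{theorem:extension} and \ref{theorem:Lipschitz:extension} by verifying that any $C^1$ smooth mapping $f:Y\to Z$ (respectively, $C^1$ smooth and Lipschitz mapping) satisfies the mean value condition on $Y$ (respectively, the mean value condition for a bounded map) when $Y$ is viewed as a closed subset of $X$. Once this is established, the existence of a $C^1$ smooth extension $F:X\to Z$ with the desired Lipschitz estimate follows directly from those theorems using property (*) of the pair $(X,Z)$.

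The key step is to establish the obvious variant of the lemma embedded in the proof of Proposition \ref{meanvalue:of:smooth}: there exist a constant $\eta\ge 1$ depending on $X,Y,Z$ and a continuous section $B:\mathcal{L}(Y,Z)\to \mathcal{L}(X,Z)$ of the restriction map, that is, $B(T)_{\mid_Y}=T$ and $\|B(T)\|_{\LsXZ}\le \eta\,\|T\|_{\LsYZ}$ for every $T\in \mathcal{L}(Y,Z)$. The proof is essentially the same as in Proposition \ref{meanvalue:of:smooth}: the linear $X$-extension property of $(Y,Z)$ guarantees that the bounded linear restriction map $\mathcal{L}(X,Z)\to \mathcal{L}(Y,Z)$, $R(\widetilde T):=\widetilde T_{\mid_Y}$, is surjective, and Bartle--Graves's theorem (\cite[Lemma VII.3.2]{DGZ}) then supplies a continuous (in general, nonlinear) section $B$ with the required norm control. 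The only modification with respect to Proposition \ref{meanvalue:of:smooth} is that the constant $\eta$ now depends on the particular subspace $Y$, instead of being uniform over all closed subspaces of $X$.

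Once $B$ has been constructed, the map $D:Y\to \LsXZ$ defined by $D(y):=B(f'(y))$ is continuous (as a composition of the continuous map $f':Y\to \mathcal{L}(Y,Z)$ with $B$), and satisfies $D(y)_{\mid_Y}=f'(y)$ for every $y\in Y$. A standard mean value argument on the linear subspace $Y$, identical to that in the proof of Proposition \ref{meanvalue:of:smooth}, shows that for every $y\in Y$ and every $\varepsilon>0$ there is a radius $r>0$ such that
\begin{equation*}
\|f(z)-f(w)-D(y)(z-w)\|=\|f(z)-f(w)-f'(y)(z-w)\|\le \varepsilon\,\|z-w\|
\end{equation*}
for all $z,w\in Y\cap B(y,r)$, because $z-w\in Y$ and $D(y)$ agrees with $f'(y)$ on $Y$. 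Thus $f$ satisfies the mean value condition on $Y$ for $D$. Moreover, if $f$ is Lipschitz then $f'$ is bounded on $Y$ and the estimate $\|D(y)\|_{\LsXZ}\le \eta\,\|f'(y)\|_{\LsYZ}\le \eta\,\Lip(f)$ shows that $D$ is a bounded map, so $f$ satisfies the mean value condition for a bounded map.

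Applying Theorem \ref{theorem:extension} yields a $C^1$ smooth extension $F:X\to Z$ of $f$ in the general case. In the Lipschitz case, Theorem \ref{theorem:Lipschitz:extension} gives a $C^1$ smooth and Lipschitz extension $F$ with
\begin{equation*}
\Lip(F)\le (1+C_0)\bigl(\sup_{y\in Y}\|D(y)\|+\Lip(f)\bigr)\le (1+C_0)(\eta+1)\,\Lip(f)=:C\,\Lip(f),
\end{equation*}
where $C_0$ depends only on $X$ and $Z$ and $\eta$ depends on $X,Y,Z$, so $C$ depends on $X,Y,Z$ as claimed. There is no substantive obstacle here beyond noticing that the Bartle--Graves argument only needs surjectivity of the restriction map for the particular subspace $Y$, not the uniform linear extension property of $X$; this is precisely what the linear $X$-extension property of $(Y,Z)$ provides.
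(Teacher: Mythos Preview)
Your proposal is correct and follows essentially the same approach as the paper: the paper simply says that the corollary follows from Theorems \ref{theorem:extension} and \ref{theorem:Lipschitz:extension} together with a slight modification of Proposition \ref{meanvalue:of:smooth}, and your argument is exactly that modification spelled out in full. The only difference from Proposition \ref{meanvalue:of:smooth}, which you identify correctly, is that surjectivity of the restriction map $\mathcal{L}(X,Z)\to\mathcal{L}(Y,Z)$ (needed for Bartle--Graves) now comes from the linear $X$-extension property of the particular pair $(Y,Z)$ rather than from a uniform property over all subspaces, so the resulting constant $\eta$ depends on $Y$ as well.
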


%%%%%%%%%%%%%%%%%%%%%%%%%%%%%%%%%%
%%%%%%%%%%%%%%%%%%%%%%%%%%%%%%%%%%

We conclude this note with some considerations on  extension morphisms of $C^1$
smooth mappings. Let $X$ and $Z$ be Banach spaces and consider the Banach space
\begin{equation*}
C_L^1(X,Z):=\{f:X\toÊZ\ :\,  f \text{ is } C^1 \text{ smooth and Lipschitz}\},
\end{equation*}
with the norm $||f||_{C_L^1}:=||f(0)||+\Lip(f)$. We write  $C_L^1(X):=C_L^1(X,\Real)$.
\begin{defn}
Let $X$ and $Z$ be Banach spaces and  $Y$  a closed subspace of  $X$.
We say that a bounded and linear mapping $T:C_L^1(Y,Z) \to C_L^1(X,Z)$ ($T:Y^* \rightarrow X^*$) is an extension morphism whenever $T(f)_{\mid_Y}=f$ for every $f\in C_L^1(Y,Z)$ (for every $f\in Y^*$, respectively).
\end{defn}

\begin{lem}\label{morphism}
Let $X$ be a Banach space and $Y$ a closed subspace of $X$. If there exists an extension morphism $T:C_L^1(Y)\to C_L^1(X)$, then there  exists an extension morphism $S:Y^* \to X^*$.
\end{lem}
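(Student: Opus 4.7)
The plan is to exploit the fact that every element $\varphi\in Y^{*}$ is automatically a member of $C_L^1(Y)$: indeed $\varphi$ is linear and $\|\varphi\|_{Y^*}$-Lipschitz, $C^{\infty}$ smooth, with $\varphi(0)=0$, so $\|\varphi\|_{C_L^1(Y)}=\|\varphi\|_{Y^*}$. Thus $T(\varphi)\in C_L^1(X)$ is a $C^{1}$ smooth Lipschitz extension of $\varphi$ to all of $X$, and it makes sense to define
\[
S(\varphi):=T(\varphi)'(0)\in X^{*}.
\]

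First I would verify that $S$ is linear and bounded from $Y^{*}$ into $X^{*}$. Linearity is immediate, since both $T$ and the evaluation $f\mapsto f'(0)$ are linear operations. For boundedness, note that
\[
\|S(\varphi)\|_{X^{*}}=\|T(\varphi)'(0)\|_{X^{*}}\le\Lip(T(\varphi))\le\|T(\varphi)\|_{C_L^1(X)}\le \|T\|\,\|\varphi\|_{C_L^1(Y)}=\|T\|\,\|\varphi\|_{Y^{*}},
\]
so $S\in\mathcal{L}(Y^{*},X^{*})$ with $\|S\|\le\|T\|$.

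The only real content is the extension property $S(\varphi)_{\mid_{Y}}=\varphi$, and this is where the argument needs to be made cleanly. Fix $y\in Y$ and consider the real function $\alpha(t):=T(\varphi)(ty)$. By the chain rule, $\alpha'(0)=T(\varphi)'(0)(y)=S(\varphi)(y)$. On the other hand, since $ty\in Y$ for every $t\in\mathbb{R}$ and $T(\varphi)$ restricted to $Y$ coincides with $\varphi$ (because $T$ is an extension morphism), we have $\alpha(t)=\varphi(ty)=t\varphi(y)$, and therefore $\alpha'(0)=\varphi(y)$. Combining these two identities gives $S(\varphi)(y)=\varphi(y)$ for all $y\in Y$, which is exactly what we need.

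The potentially delicate point, and the only reason the statement is not trivial, is that $T(\varphi)$ is a priori merely a $C^{1}$ Lipschitz map without any linear structure, so one must access the functional $\varphi$ through the derivative of $T(\varphi)$ rather than through pointwise values; the directional-derivative computation above is precisely what converts the (non-linear) extension morphism $T$ into the linear extension morphism $S$. Everything else is formal.
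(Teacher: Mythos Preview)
Your proof is correct and follows essentially the same route as the paper: define $S(\varphi)=T(\varphi)'(0)$, check linearity and the bound $\|S\|\le\|T\|$, and recover the extension property via the directional derivative along $Y$. The only difference is cosmetic: the paper packages the derivative-at-zero step as a bounded linear map $D:C_L^1(X)\to X^*$, $D(f)=f'(0)$, and then sets $S=(D\circ T)_{\mid_{Y^*}}$, whereas you spell out the one-variable computation $\alpha(t)=T(\varphi)(ty)$ explicitly.
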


\begin{proof}
 Let  $T: C_L^1(Y)\to C_L^1(X)$ be an extension morphism and define
 $D:C_L^1(X)\to X^*$ as $D(f)=f'(0)$ for every $f\in C_L^1(X)$. The mapping $D$ is  linear, bounded and $||D||\le 1$.  Thus, $D\circ T:C_L^1(Y)\to X^*$ is linear and bounded. Also,  $(D\circ T(f))_{\mid_Y}=(T(f)'(0))_{\mid_Y}=f'(0)\in \mathcal L(Y,Z)$. Now, let us take the restriction $S:=D\circ  T_{\mid_{Y^*}}:Y^*\to X^*$, which is linear, bounded,
\begin{align*}
 S(\varphi)_{\mid_Y} & =(T(\varphi))'(0)_{\mid_Y}=\varphi'(0)=\varphi, \text{ and} \\
    ||S(\varphi)||_{X^*} & = ||D\circ T(\varphi)||_{X^*} \le ||T(\varphi)||_{C_L^1}\le ||T||\,||\varphi||_{C_L^1},
\end{align*}
for every $\varphi \in Y^*$.
\end{proof}

The above lemma and the results given by H. Fakhoury in \cite{Fakhoury} provide the
following characterizations.

\begin{prop}\label{equivalencias}
Let $X$ and $Z$ be Banach spaces. The following statements are equivalent:
\begin{enumerate}
\item[(i)] There is a constant $M> 0$  such that  for every closed subspace $Y\subset X$, there exists an   extension morphism  $P: C_L^1(Y,Z)\to C_L^1(X,Z)$ with $||P||\le M$.
\item[(ii)] There is a constant $M> 0$  such that  for every closed subspace $Y\subset X$, there exists an   extension morphism  $T: C_L^1(Y)\to C_L^1(X)$ with $||T||\le M$.
\item[(iii)] There is a constant $M> 0$  such that  for every closed subspace $Y\subset X$, there exists an extension morphism  $S: Y^* \to X^*$ with $||S||\le M$.
\item[(iv)]  $X$ is isomorphic to a Hilbert space.
\end{enumerate}
\end{prop}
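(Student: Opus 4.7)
The plan is to establish the cycle of implications (i)$\Rightarrow$(ii)$\Rightarrow$(iii)$\Rightarrow$(iv)$\Rightarrow$(i), so that three of the four arrows reduce to direct constructions and the single deep point is deferred to Fakhoury's theorem from \cite{Fakhoury}.

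The implication (i)$\Rightarrow$(ii) is immediate by specializing to $Z=\Real$. For (ii)$\Rightarrow$(iii), I would invoke Lemma \ref{morphism} directly: starting from an extension morphism $T:C_L^1(Y)\to C_L^1(X)$ with $\|T\|\le M$, the lemma produces an extension morphism $S:Y^*\to X^*$ satisfying the estimate $\|S(\varphi)\|_{X^*}\le\|T\|\,\|\varphi\|_{C_L^1}$. Since for any $\varphi\in Y^*$ one has $\varphi(0)=0$ and hence $\|\varphi\|_{C_L^1}=\|\varphi(0)\|+\Lip(\varphi)=\|\varphi\|_{Y^*}$, this yields $\|S\|\le M$ with the same constant.

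The implication (iii)$\Rightarrow$(iv) is the heart of the statement and the main obstacle: it is exactly Fakhoury's characterization \cite{Fakhoury} of Banach spaces isomorphic to a Hilbert space as those $X$ in which every closed subspace $Y$ admits a uniformly bounded linear extension operator $Y^*\to X^*$. Since this is a known deep result, I would simply cite it.

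For (iv)$\Rightarrow$(i), let $\Phi:X\to H$ be an isomorphism of $X$ onto a Hilbert space $H$. For every closed subspace $Y\subset X$, let $P_{\Phi(Y)}:H\to \Phi(Y)$ be the orthogonal projection and set
\begin{equation*}
\pi_Y:=\Phi^{-1}\circ P_{\Phi(Y)}\circ \Phi:X\to Y.
\end{equation*}
Then $\pi_Y$ is a bounded linear projection with $\pi_Y{\mid_Y}=\operatorname{id}_Y$ and $\|\pi_Y\|\le \|\Phi\|\,\|\Phi^{-1}\|=:C_X$, a constant depending only on $X$. Define $P:C_L^1(Y,Z)\to C_L^1(X,Z)$ by $P(f):=f\circ \pi_Y$. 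Since $\pi_Y$ is linear and continuous, $P(f)$ is $C^1$ smooth with $\Lip(P(f))\le \|\pi_Y\|\,\Lip(f)\le C_X\Lip(f)$, and $P(f)(0)=f(\pi_Y(0))=f(0)$, so $\|P(f)\|_{C_L^1}\le \max\{1,C_X\}\,\|f\|_{C_L^1}$. Moreover $P(f){\mid_Y}=f$, so $P$ is the required extension morphism with $\|P\|\le \max\{1,C_X\}$, a bound independent of $Y$ and $Z$. This closes the cycle.
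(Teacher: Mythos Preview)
Your argument for (i)$\Rightarrow$(ii) has a genuine gap. In the proposition, $Z$ is a \emph{fixed} Banach space given in the hypothesis, not a universally quantified variable you may specialize. Statement (i) asserts the existence of extension morphisms $C_L^1(Y,Z)\to C_L^1(X,Z)$ for this particular $Z$, and you must deduce (ii), which concerns real-valued mappings. You cannot simply ``set $Z=\Real$''. The paper handles this correctly: fix $z\in S_Z$ and $\varphi\in S_{Z^*}$ with $\varphi(z)=1$; for $f\in C_L^1(Y)$ define $f_z(y)=f(y)z\in C_L^1(Y,Z)$, apply the given $P$ to obtain $R_f:=P(f_z)\in C_L^1(X,Z)$, and then set $T(f):=\varphi\circ R_f\in C_L^1(X)$. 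This $T$ is linear, extends $f$, and has norm at most $M$.

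The remaining implications in your proposal are fine and match the paper's approach: (ii)$\Rightarrow$(iii) via Lemma~\ref{morphism}, (iii)$\Leftrightarrow$(iv) via Fakhoury, and (iv)$\Rightarrow$(i) via uniformly bounded projections onto closed subspaces of a Hilbert space (the paper cites \cite{LindTza} for this last step; your explicit construction with $\pi_Y=\Phi^{-1}\circ P_{\Phi(Y)}\circ\Phi$ is exactly what is meant).
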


\begin{proof}
The equivalence between $(iii)$ and $(iv)$ was established in \cite[Th\'eor\`eme 3.7]{Fakhoury}.

$(i)\Rightarrow(ii)$ Let us take $z\in S_Z$ and $\varphi\in S_{Z^*}$ (where $ S_Z$ and $S_{Z^*}$ denote the unit spheres of $Z$ and $Z^*$, respectively)
 such that $\varphi(z)=1$. Let $Y$ be a closed subspace of $X$ and $P: C_L^1(Y,Z)\to C_L^1(X,Z)$ an  extension morphism with $||P|| \le M$. For every $f\in C_L^1(Y)$,   let us consider  the Lipschitz and $C^1$ smooth mapping $f_z:  Y \to   Z$ defined as $f_z(y)=zf(y)$ for every $y\in Y$.  Let us define $R_f:=P(f_z)\in C_L^1(X,Z) $. Then, the mapping $T: C_L^1(Y)\to C_L^1(X)$ defined as
 $T(f)=\varphi \circ R_f$,  where  $f\in C_L^1(Y)$  is a linear extension mapping with $||T||\le M$.

$(ii)\Rightarrow (iii)$ is given by Lemma \ref{morphism} and $(iv)\Rightarrow(i)$ follows from the
result  that every closed subspace of a Hilbert space $H$ is complemented in $H$ \cite{LindTza}.
\end{proof}

Recall that $X$ is a $\mathscr{P}_\lambda$-space if $X$ is a complemented subspace of every Banach superspace $W$ of $X$ (see \cite[p. 95]{Day}).

\begin{cor}
Let $X$ and $Z$ be Banach spaces. The following statements are equivalent:
\begin{enumerate}
\item[(i)] There is a constant $M> 0$  such that  for every Banach superspace $W$ of $X$, there exists an   extension morphism  $P: C_L^1(X,Z)\to C_L^1(W,Z)$ with $||P||\le M$.
\item[(ii)] There is a constant $M> 0$  such that  for every Banach  superspace $W$ of $X$, there exists an   extension morphism  $T: C_L^1(X)\to C_L^1(W)$ with $||T||\le M$.
\item[(iii)] There is a constant $M> 0$  such that  for every Banach superspace $W$ of $X$, there exists an extension morphism  $S: X^* \to W^*$ with $||S||\le M$.
\item[(iv)]  $X$ is a $\mathscr{P}_\lambda$-space.
\end{enumerate}
\end{cor}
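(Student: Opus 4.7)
The plan is to mirror the proof of Proposition~\ref{equivalencias}, with \emph{subspace} everywhere replaced by \emph{superspace}. I will establish the implications in the cycle (iv)$\Rightarrow$(i)$\Rightarrow$(ii)$\Rightarrow$(iii)$\Rightarrow$(iv).

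For (iv)$\Rightarrow$(i), if $X$ is $\mathscr{P}_\lambda$ then every Banach superspace $W\supset X$ admits a bounded linear projection $\pi_W:W\to X$ with $\|\pi_W\|\le\lambda$. The assignment $P(f):=f\circ\pi_W$ is linear, sends $C_L^1(X,Z)$ into $C_L^1(W,Z)$, and satisfies $(Pf)_{\mid_X}=f$, $(Pf)(0)=f(0)$, and $\Lip(Pf)\le\lambda\Lip(f)$; so $\|P\|\le\max\{1,\lambda\}$ uniformly in $W$.

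For (i)$\Rightarrow$(ii) I adapt the vectorisation trick already used in Proposition~\ref{equivalencias}: fix $z\in S_Z$ and $\varphi\in S_{Z^*}$ with $\varphi(z)=1$; given $f\in C_L^1(X)$ set $f_z(x):=f(x)z\in C_L^1(X,Z)$ and define $T(f):=\varphi\circ P(f_z)$. For (ii)$\Rightarrow$(iii) I reproduce the argument of Lemma~\ref{morphism}: given an extension morphism $T:C_L^1(X)\to C_L^1(W)$, put $S(\varphi):=(T\varphi)'(0)$ for $\varphi\in X^*\subset C_L^1(X)$. Since $T\varphi$ agrees with the linear map $\varphi$ on $X$, differentiating $t\mapsto T(\varphi)(tx)=t\varphi(x)$ at $t=0$ gives $(T\varphi)'(0)(x)=\varphi(x)$ for every $x\in X$; hence $S:X^*\to W^*$ is a linear extension morphism with $\|S\|\le\|T\|$.

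The main obstacle is the implication (iii)$\Rightarrow$(iv), which is the dual counterpart of the Fakhoury-type equivalence invoked in Proposition~\ref{equivalencias}. An extension morphism $S:X^*\to W^*$ is a bounded linear section of the restriction quotient $W^*\to W^*/X^\perp\cong X^*$; equivalently, the annihilator $X^\perp$ is boundedly complemented in $W^*$, and the associated map $w\mapsto\bigl(\varphi\mapsto S(\varphi)(w)\bigr)$ is a bounded linear map $W\to X^{**}$ extending the canonical inclusion $X\hookrightarrow X^{**}$. Converting this dual-side complementation into an honest bounded projection $W\to X$ whose norm is controlled by $\|S\|$, uniformly as $W$ varies over all superspaces of $X$, is the content of the Hahn-Banach linear-selection theorem of \cite{Fakhoury}; it is the only non-routine ingredient and yields the $\mathscr{P}_\lambda$-property of $X$, closing the cycle.
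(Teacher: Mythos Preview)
Your proposal is correct and follows essentially the same approach as the paper: both establish the cycle (iv)$\Rightarrow$(i)$\Rightarrow$(ii)$\Rightarrow$(iii)$\Rightarrow$(iv), using composition with the bounded projection for (iv)$\Rightarrow$(i), the vectorisation trick of Proposition~\ref{equivalencias} for (i)$\Rightarrow$(ii), the argument of Lemma~\ref{morphism} for (ii)$\Rightarrow$(iii), and the citation of Fakhoury (\cite[Corollaire~3.3]{Fakhoury}) for (iii)$\Leftrightarrow$(iv). Your write-up spells out the details more explicitly than the paper, which simply refers back to Proposition~\ref{equivalencias} and to Fakhoury.
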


\begin{proof}
The equivalence between $(iii)$ and $(iv)$ was established in \cite[Corollaire 3.3]{Fakhoury}. The rest of the proof is similar to that of Proposition  \ref{equivalencias}.
\end{proof}

%%%%%%%%%%%%%%%%%%%%%%%%%%%%%%%%%%
%%%%%%%%%%%%%%%%%%%%%%%%%%%%%%%%%%
%%%%%%%%%%%%%%%%%%

\section*{Acknowledgments}

Luis S\'anchez-Gonz\'alez conducted part of this research while at the Institut de Math\'ematiques de Bordeaux. This author is indebted to the members of the Institut  and very especially to Robert Deville for their kind hospitality and for many useful discussions. The authors also wish to thank Jes\'us M.F. Castillo, Ricardo Garc\'ia and Jes\'us Su\'arez for several valuable discussions concerning the results of this paper.
  
%%%%%%%%%%%%%%%%%%%%%%%%%%%%%%%%%%
%%%%%%%%%%%%%%%%%%%%%%%%%%%%%%%%%%
%%%%%%%%%%%%%%%%%%
%%%%%%%%%%%%%%%%%%%%%%%%%%%%%%%%%%
%%%%%%%%%%%%%%%%%%%%%%%%%%%%%%%%%%
%%%%%%%%%%%%%%%%%%%%%%%%%%%%%%%%%%
%%%%%%%%%%%%%%%%%%
%%%%%%%%%%%%%%%%%%%%%%%%%%%%%%%%%%
%%%%%%%%%%%%%%%%%%%%%%%%%%%%%%%%%%
%%%%%%%%%%%%%%%%%%


\begin{thebibliography}{998}
\bibitem{AlbiacKalton} F. Albiac and N.J. Kalton, \emph{Topics in Banach space theory}, Graduate Texts in Mathematics, vol. \textbf{233}, Springer, New York (2006).
\bibitem{Azafrykeener} D. Azagra, R. Fry and L. Keener, \emph{Smooth extension of functions on separable Banach spaces}, Math. Ann. \textbf{347}  (2) (2010), 285-297.
\bibitem{Ball} K. Ball, \emph{Markov chains, Riesz transforms and Lipschitz maps}, Geom. Funct. Anal. \textbf{2} (2) (1992), 137-172.
\bibitem{BenyLind} Y. Benyamini and J. Lindenstrauss, \emph{Geometric Nonlinear Functional Analysis, Volumen I}, Amer. Math. Soc. Colloq. Publ. vol. \textbf{48}, Amer. Math. Soc., Providence, RI  (2000).
\bibitem{Day} M. Day, \emph{Normed Linear Spaces}, Springer, Berlin (1963).
\bibitem{DGZ} R. Deville, G. Godefroy and V. Zizler, \emph{Smoothness and renormings in Banach spaces}, Pitman Monographs and Surveys in Pure and Applied Mathematics, vol. \textbf{64} (1993).
\bibitem{Dugundji} J. Dugundji, Topology, Allyn and Bacon, Inc., Boston, Mass. (1966).
\bibitem{Fakhoury} H. Fakhoury, \emph{S\'elections lin\'eaires associ\'ees au th\'eor\`eme de Hahn-Banach}, J. Funct. Anal.  \textbf{11}  (1972), 436-452.
\bibitem{Fry} R. Fry, \emph{Corrigendum to :``Approximation by $C^p$-smooth, Lipschitz functions on Banach spaces, J. Math. Anal.  Appl. 315 (2006), 599-605''}, J. Math. Anal.  Appl. \textbf{348} (1)  (2008),  571.
\bibitem{HJc0} P. H\'ajek and M. Johanis, \emph{Uniformly G\^ateaux smooth approximations on $c_0(\Gamma)$}, J. Math. Anal. Appl. \textbf{350} (2) (2009), 623-629.
\bibitem{HJ} P. H\'ajek and M. Johanis, \emph{Smooth approximations}, J. Funct. Anal. \textbf{259} (3) (2010), 561-582.
\bibitem{MarLuis} M. Jim\'enez-Sevilla and L. S\'anchez-Gonz\'alez, \emph{Smooth extension of functions on a certain class of non-separable Banach spaces}, J. Math. Anal. Appl. \textbf{378} (1) (2011),  173-183.
\bibitem{JohLindSche} W.B. Johnson, J. Lindenstrauss and G. Schechtman, \emph{Extensions of Lipschitz maps into Banach spaces}, Israel J. Math.  \textbf{54} (2) (1986), 129-138.
\bibitem{JohnZippin} W.B. Johnson and M. Zippin, \emph{Extension of operators from subspaces of {$c_0(\Gamma)$}  into {$C(K)$} spaces}, Proc. Amer. Math. Soc. \textbf{107} (3) (1989), 751-754.
\bibitem{Kalton} N.J. Kalton, \emph{Extension of linear operators and Lipschitz maps into $C(K)$-spaces}, New York J. Math. \textbf{13}  (2007), 317-381.
\bibitem{Kirs} M.D. Kirszbraun, \emph{\"Uber die zusammenziehende und Lipschitzche Transformationen}, Fund. Math. \textbf{22} (1934), 77-108.
\bibitem{LindPel} J. Lindenstrauss and A. Pe{\l}czy{\'n}ski, \emph{Contributions to the theory of the classical Banach spaces}, J. Funct. Anal. \textbf{8} (1971), 225-249.
\bibitem{LindTza} J. Lindenstrauss and L. Tzafriri, \emph{On the complemented subspaces problem},  Israel J. Math. \textbf{9} (1971), 263-269.
\bibitem{Maurey} B. Maurey, \emph{Un th\'eor\`eme de prolongement}, C. R. Acad. Sci. Paris S\'er. A  \textbf{279}  (1971), 329-332.
\bibitem{Naor} A. Naor, \emph{A phase transition phenomenon between the isometric and isomorphic extension problems for H\"{o}lder functions between $L_p$ spaces}, Mathematika \textbf{48} (2001), 253-271.
\bibitem{Rudin} M.E. Rudin, \emph{A new proof that metric spaces are paracompact}, Proc. Amer. Math. Soc. \textbf{20} (2) (1969), 603.
\bibitem{Sobczyk} A. Sobczyk, \emph{Projection of the space $(m)$ on its subspace $(c_0)$}, Bull. Amer. Math. Soc. \textbf{47}  (1941), 938-947.
\bibitem{Tsar} I.G.  Tsar'kov, \emph{Extension of Hilbert-valued Lipschitz mappings},
Moscow Univ. Math. Bull.  \textbf{54} (6) (1999), 7-14.
\end{thebibliography}
\end{document}